\newtheorem{thm}{Theorem}
\newtheorem{prop}{Proposition}
\newtheorem{lem}{Lemma}
\newtheorem{cor}{Corollary}
\newtheorem{conj}{Conjecture}
\newtheorem*{thmasphericity}{Theorem \ref{asphericity}}
\newtheorem*{thmLefschetzbundles}{Theorem \ref{Lefschetzbundles}}
\theoremstyle{remark}
\newtheorem{rem}{Remark}
\newtheorem{ex}{Example}
\theoremstyle{definition}
\newtheorem{defn}[thm]{Definition}
\title{The topology of symplectic circle bundles}
\author{Jonathan Bowden}
\address{Mathematisches Institut, Ludwig-Maximilians-Universit\"at, Theresienstr. 39, 80333 M\"unchen, Germany}
\email{jonathan.bowden@mathematik.uni-muenchen.de}
\date{19 Nov 2007. 2000 $Mathematics \ Subject \ Classification$. Primary 57R17; Secondary 57N10, 57N13}
\begin{document}

\begin{abstract}
We consider circle bundles over compact three-manifolds with symplectic total spaces. We show that the base of such a space must be irreducible or the product of the  two-sphere with the circle. We then deduce that such a bundle admits a symplectic form if and only if it admits one that is invariant under the circle action in three special cases: namely if the base is Seifert fibered, has vanishing Thurston norm, or if the total space admits a Lefschetz fibration.
\end{abstract}

\maketitle

\section{Introduction}
A conjecture due to Taubes states that if a closed, compact 4-manifold of the form $M \times S^1$ is symplectic, then $M$ must fiber over $S^1$. A natural extension of this conjecture is to the case where  $E \stackrel{\pi} \rightarrow M$ is a possibly nontrivial circle bundle. In \cite{FGM} it was shown that if an $S^1$-bundle admits an $S^1$-invariant symplectic form then the base must fiber over $S^1$ and the Euler class $e(E)$ pairs trivially with the fiber of some fibration. Thus based on the principle that an $S^1$-bundle should admit a symplectic form if and only if it admits an invariant one, one arrives at the following conjecture.
\begin{conj}[Taubes]\label{strong}
If a circle bundle $S^1 \to E \stackrel{\pi} \rightarrow M$ over a closed, compact 3-manifold is symplectic, then there is a fibration $\Sigma \to M \stackrel{\phi}\rightarrow  S^1$ such that $e(E)([\Sigma]) = 0$.
\end{conj}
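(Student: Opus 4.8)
The strategy I would follow rests on the ``principle'' highlighted in the introduction: rather than build the fibration by hand, I would try to replace the given symplectic form by an $S^1$-invariant one and then quote \cite{FGM}. Concretely, average $\omega$ over the circle action to obtain a closed invariant $2$-form $\bar\omega$ with $[\bar\omega] = [\omega]$, hence $\int_E \bar\omega^2 = \int_E \omega^2 > 0$. Choosing an invariant connection $1$-form $\alpha$ (so $d\alpha = \pi^* F$ with $[F] = e(E)$) I can write $\bar\omega = \pi^*\beta + \alpha\wedge\pi^*\gamma$ for a $2$-form $\beta$ and a $1$-form $\gamma$ on $M$. A direct computation gives $\bar\omega^2 = 2\,\alpha\wedge\pi^*(\beta\wedge\gamma)$, and $d\bar\omega = 0$ iff $d\gamma = 0$ and $F\wedge\gamma$ is exact; the latter forces $e(E)\cup[\gamma] = 0$ in $H^3(M;\mathbb{R})$.

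Everything thus reduces to a single point: can $\gamma$ be chosen nowhere vanishing? Nondegeneracy of $\bar\omega$ is equivalent to $\beta\wedge\gamma$ being a volume form on $M$, which in particular needs $\gamma \neq 0$ pointwise; conversely a nowhere-zero closed $1$-form $\gamma$ can be completed to an invariant symplectic $\bar\omega$. By Tischler's theorem a nowhere-zero closed $1$-form exists exactly when $M$ fibres over $S^1$, and its class may be taken to be $\phi^* dt$ so that $\mathrm{PD}[\gamma] = [\Sigma]$; the closedness relation above then reads precisely $e(E)([\Sigma]) = 0$. The whole conjecture is therefore equivalent to showing that the class $[\gamma]$ determined by $\omega$ lies in the cone on a \emph{fibred} face of the Thurston norm ball of $M$.

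Two inputs feed this. First I would constrain the base: a symplectic $4$-manifold has nonvanishing Seiberg--Witten invariants and thus cannot be an essential connected sum, and I would leverage this to rule out reducible bases, leaving $M$ irreducible or $S^2\times S^1$ (the latter already fibres and is checked directly). Second, and centrally, Taubes' nonvanishing theorem supplies a $\mathrm{Spin}^c$ structure on the circle bundle $E$ with nonzero invariant; computing it in terms of three-dimensional data via a Meng--Taubes / Milnor--Turaev-torsion identity should detect the Thurston norm in the direction of $[\gamma]$ and identify the relevant face as fibred, whereupon Gabai's realisation of the Thurston norm by taut foliations together with Thurston's fibration criterion promote this to an honest fibration $\Sigma\to M\xrightarrow{\phi}S^1$.

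The crux --- and the reason the statement remains a conjecture --- is exactly this last promotion from a Seiberg--Witten/Thurston-norm lower bound to a genuine fibration of $M$; in full generality it is equivalent to Taubes' conjecture for $M\times S^1$ and is not available. I would therefore expect to carry the argument to completion only in the three regimes where the fibration can be produced by independent means: when $M$ is Seifert fibred, using the explicit classification to decide which such manifolds fibre over $S^1$; when the Thurston norm vanishes, where fiberedness follows from the restricted topology of $M$; and when $E$ carries a Lefschetz fibration, which pushes down to a fibration of $M$ directly. In each of these the construction of $\gamma$, and hence of the invariant symplectic form, goes through, yielding both the equivalence ``symplectic $\Leftrightarrow$ invariant symplectic'' and the conclusion $e(E)([\Sigma]) = 0$.
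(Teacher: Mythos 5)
The statement you were asked about is Conjecture \ref{strong} itself: the paper does not prove it, and you are right not to claim a proof. Your assessment of where the difficulty lies --- producing a nowhere-vanishing closed $1$-form in the ``right'' cohomology class, i.e.\ landing in a fibered cone of the Thurston norm ball --- is exactly the obstruction, and the three regimes in which you say the argument can be completed (Seifert fibered base, vanishing Thurston norm, Lefschetz fibration on $E$) are precisely the partial results the paper establishes (Corollary \ref{Seifertcase} and Theorem \ref{Lefschetzbundles}), with the irreducible-or-$S^2\times S^1$ dichotomy being Theorem \ref{asphericity}. So your program coincides with the paper's. Two caveats on your sketch. First, the averaging step is a dead end as stated: the averaged form $\bar\omega$ is closed and invariant with $\int_E\bar\omega^2>0$, but it is in general degenerate, so the class $[\gamma]$ it determines carries no more information than ``$[\omega]^2\neq 0$''; the paper never averages, and instead extracts the constraint on $M$ from Seiberg--Witten theory (Taubes' nonvanishing, Baldridge's formula relating $SW^4_E$ to $SW^3_M$, the adjunction inequality, and a Kronheimer--Mrowka vanishing theorem for splittings along $S^2\times S^1$). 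Second, your descriptions of how the special cases close up differ from the paper's in detail: the vanishing-norm case is settled by showing the canonical class is torsion and invoking the vanishing of the Lescop/Turaev sum of $SW^3$ when $b_1$ is large, not by ``restricted topology of $M$''; and the Lefschetz case does not ``push down'' a fibration but first kills all critical points via the adjunction inequality and then applies Stallings' fibration criterion to $\pi_1(M)=\pi_1(E)/\langle\gamma\rangle$. These are differences of mechanism, not of outcome, and none of them upgrades the conjecture to a theorem.
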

If an oriented 3-manifold fibers over $S^1$ with fiber $\Sigma \neq S^2$, then it follows by the long exact homotopy sequence that $M$ is in fact aspherical. So a necessary condition for Conjecture \ref{strong} to hold is that any $M$ that is the base of an $S^1$-bundle, whose total space carries a symplectic form, must in fact be aspherical or $S^2 \times S^1$ in the case $\Sigma = S^2$. This observation provides the motivation for the following theorem, which is the main result of the first part of this paper.
\begin{thmasphericity}
Let $M$ be an oriented, closed 3-manifold, so that some circle bundle $S^1 \rightarrow E \stackrel{\pi} \rightarrow M$ admits a symplectic structure, then, either $M$ is diffeomorphic to $S^2 \times S^1$ and the bundle is trivial, or $M$ is irreducible and aspherical.
\end{thmasphericity}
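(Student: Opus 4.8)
The plan is to play the prime decomposition of $M$ against gauge-theoretic constraints on the symplectic $4$-manifold $E$. By the prime decomposition theorem an oriented closed $3$-manifold is a connected sum of prime pieces, an orientable prime piece is either $S^2\times S^1$ or irreducible, and an irreducible $3$-manifold with infinite fundamental group is aspherical. Thus it suffices to exclude two situations: that $M$ is reducible with some summand other than $S^2\times S^1$, and that $M$ is irreducible with finite $\pi_1$ (a spherical space form, including $S^3$). Two facts will be used throughout: since $E$ is a circle bundle over a $3$-manifold we have $\chi(E)=0$ and $\sigma(E)=0$, whence $c_1^2(E)=2\chi(E)+3\sigma(E)=0$ and $b^+(E)=b^-(E)$; and any symplectic form on $E$ forces $b^+(E)\ge 1$.

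First I would feed these numerical constraints through the Gysin sequence of $S^1\to E\xrightarrow{\pi}M$, expressing $b_1(E)$ and $b_2(E)$ in terms of $b_1(M)$ and of whether the rational Euler class $e_{\mathbb{Q}}$ vanishes. The computation yields $b_2(E)=2b_1(M)$ when $e_{\mathbb{Q}}=0$ and $b_2(E)=2b_1(M)-2$ when $e_{\mathbb{Q}}\neq 0$, so that $b^+(E)\ge 1$ already forces $b_1(M)\ge 1$. This disposes of the spherical space forms at once: such an $M$ has $b_1(M)=0$, hence $b^+(E)=0$ and $E$ cannot be symplectic. The same formula shows that over $M=S^2\times S^1$ a bundle with $e_{\mathbb{Q}}\neq 0$ has $b_2(E)=0$, so if $M=S^2\times S^1$ the bundle must be trivial, and then $E=S^2\times T^2$ is genuinely symplectic, exactly as the statement allows. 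After this step the one remaining case to kill is that $M$ is reducible, not $S^2\times S^1$, and has $b_1(M)\ge 1$.

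For reducibility the main tool is Taubes' theorem that a symplectic $4$-manifold carries a nontrivial Seiberg-Witten basic class, combined with the adjunction inequality. A reducible $M$ other than $S^2\times S^1$ contains an essential embedded $2$-sphere $S$. In the favourable case where $S$ can be taken homologically nontrivial with $e|_S=0$, the restricted bundle $\pi^{-1}(S)\cong S^2\times S^1$ supplies a homologically essential section sphere $\hat S\subset E$ of self-intersection $0$; for $b^+(E)>1$ a basic class paired against this sphere of nonnegative square violates the adjunction inequality, which is the contradiction sought.

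The hard part, and the step I expect to be the main obstacle, is the degenerate subcases of this last argument. If $S$ is separating then $[S]$ is null-homologous, so $e|_S=0$ but the lift $\hat S$ is also null-homologous and the adjunction inequality gives nothing; here I would instead exploit the separating hypersurface $\pi^{-1}(S)\cong S^2\times S^1$, which carries positive scalar curvature, and invoke a connected-sum-type vanishing theorem for Seiberg-Witten invariants along $S^2\times S^1$ to contradict $SW(E)\neq 0$. The second delicate point is the borderline value $b^+(E)=1$, which by the Gysin computation occurs only for $b_1(M)\in\{1,2\}$; there Taubes' basic-class input must be replaced by the $b^+=1$ Seiberg-Witten theory together with the wall-crossing formula. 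Reconciling the homologically trivial spheres and the $b^+=1$ chamber subtleties is where the real work lies, while the spherical-space-form, $S^3$, and $S^2\times S^1$ cases drop out cleanly from the Gysin bookkeeping.
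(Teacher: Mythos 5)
Your overall route is the same as the paper's: the Gysin bookkeeping (Lemma \ref{gysinlemma}) forces $b_1(M)\geq 1$, disposes of finite $\pi_1$ and of nontrivial bundles over $S^2\times S^1$; asphericity of an irreducible $M$ with infinite $\pi_1$ follows from the sphere theorem, Hurewicz and Whitehead; and the reducible case is attacked by playing Taubes' nonvanishing against a Seiberg--Witten vanishing theorem for splittings along $S^2\times S^1$ (the paper quotes Kronheimer--Mrowka for this). So the skeleton is right.

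The genuine gap is in the separating-sphere case, which you correctly flag as the crux but leave unresolved. Every vanishing theorem of the type you invoke requires $b_2^+\geq 1$ on \emph{both} pieces of the splitting $E=E_1\cup_{S^2\times S^1}E_2$; without that hypothesis the statement is simply false (compare the blow-up formula for splittings along $S^3$). If $M=M_1\#M_2$ with, say, $M_2$ a rational homology sphere, then the piece of $E$ over $M_2$ has relative $b_2^+$ equal to $0$ and your contradiction evaporates. The missing idea is the paper's Lemma \ref{coveringlemma}: using residual finiteness of $3$-manifold groups (a consequence of geometrization) one passes to a finite cover of $M$ in which \emph{both} summands have $b_1\geq 2$, pulls back the bundle and its symplectic form, and only then splits $E$; one must also cap off each $M_i\setminus B^3$, extend the bundle, and compare $b_2^+$ of the capped-off total spaces with the relative $b_2^+$ of the pieces $E_i$. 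Two smaller points: the Kronheimer--Mrowka theorem only gives vanishing of the \emph{sum} of SW invariants over a torsion coset of $Spin^c$-structures, so you additionally need Taubes' constraints on basic classes to see that the canonical class is alone in its coset and the sum is $\pm 1$; and the $b^+=1$ chamber subtleties you anticipate are in fact sidestepped entirely by the same covering trick, which makes $b_2^+(E)\geq 2$.
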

A similar statement was proved by McCarthy in \cite{Mc} for the case $E = M \times S^1$. More precisely, McCarthy showed that if $M \times S^1$ admits a symplectic structure then $M$ decomposes as a connected sum $M=A \# B$ where the first Betti number $b_1(A) \geq 1$ and $B$ has no nontrivial connected covering spaces. This can be refined quite substantially following Perelman's proof of Thurston's geometrisation conjecture (see \cite{Per1}, \cite{Per2} or \cite{MoT}). For one corollary of geometrisation is that the fundamental group of a closed 3-manifold is residually finite (see \cite{H}), meaning that the $B$ in McCarthy's theorem must have trivial fundamental group, and hence by the Poincar\'e Conjecture is diffeomorphic to $S^3$. Thus in fact $M$ must be prime and hence irreducible and aspherical or $S^2 \times S^1$. Theorem \ref{asphericity} is then a generalisation of this more refined statement to the case of nontrivial $S^1$-bundles. Our argument will rely on a vanishing result of Kronheimer-Mrowka for the Seiberg-Witten invariants of a manifold that splits into two pieces along a copy of $S^2 \times S^1$, which in itself is of independent interest (cf. Proposition \ref{vanishingSW}). One may also prove Theorem \ref{asphericity} by following the argument of \cite{Mc}, see Remark \ref{McProof} below.

In the remainder of this paper we will show that Conjecture \ref{strong} holds in various special cases. Firstly we will verify the conjecture under certain additional assumptions on the topology of the base manifold $M$. In order to be able to do this we will need to understand when a manifold fibers over $S^1$. One gains significant insight into this problem by considering the Thurston norm $||\ ||_T$ on $H^1(M, \mathbb{R})$, which was introduced by Thurston in \cite{Th}. The Thurston norm enables one to see which integral classes $\alpha \in H^1(M, \mathbb{Z})$ can be represented by closed, nonvanishing 1-forms, which in turn induce fibrations of $M$ by compact surfaces.

In \cite{FVi} it was shown that if $E = M \times S^1$ admits a symplectic form and $||\ ||_T \equiv 0$ or $M$ is Seifert fibered, then $M$ must fiber over $S^1$. In Corollary \ref{Seifertcase} below we will show that in fact Conjecture \ref{strong} holds in these two cases. The argument will be based on understanding the Seiberg-Witten invariants of the total space $E$ given that $M$ has vanishing Thurston norm and the Seifert case will be deduced as a corollary of this. Indeed, if $M$ has vanishing Thurston norm and $S^1 \to E \stackrel{\pi} \rightarrow M$ is symplectic, then the canonical class of $E$ must be trivial. This combined with the restrictions on Seiberg-Witten basic classes of a symplectic manifold as proved by Taubes in \cite{Tau2} means that $K = 0$ is the only Seiberg-Witten basic class and the result then follows by an application of a vanishing result of Lescop (cf. \cite{Les} or \cite{Tur}).

Another special case of the Taubes conjecture is when the total space $E$ admits a Lefschetz fibration, as was considered in \cite{CM} and \cite{Etg} for a trivial bundle. In view of Corollary \ref{Seifertcase} we will be able to give a comparatively simple proof of the following result.
\begin{thmLefschetzbundles}
Let $S^1 \to E \stackrel{\pi} \rightarrow M$ be a symplectic circle bundle over an irreducible base $M$. If $E$ admits a Lefschetz fibration, then $M$ fibers over $S^1$.
\end{thmLefschetzbundles}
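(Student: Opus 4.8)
The plan is to leverage the earlier result (Corollary \ref{Seifertcase}) which already establishes Conjecture \ref{strong} — and in particular the fibering conclusion — in the Seifert and vanishing–Thurston–norm cases. So the strategy is to show that a Lefschetz fibration on $E$ forces $M$ into one of these settled situations, or else produces a fibration of $M$ directly. Let me sketch the approach.

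First I would analyze how the Lefschetz fibration $F \colon E \to B$ interacts with the circle action. The key structural fact about a Lefschetz fibration is that its generic fiber is a symplectic surface and the total space carries a symplectic form compatible with the fibration. Since $E$ is a circle bundle over $M$, I would examine the induced map on cohomology and the behavior of the fiber class. The first step is to understand the base $B$: a Lefschetz fibration over a surface of positive genus, or over $S^2$ with sufficiently many singular fibers, places strong constraints via the signature and Euler characteristic of $E$. Because $E$ is an $S^1$-bundle over a $3$-manifold, its Euler characteristic vanishes and its signature is zero, which severely restricts the possible Lefschetz fibrations — in particular it rules out most fibrations with nonzero numbers of vanishing cycles unless they are balanced in a very specific way.

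The central idea is to use the monodromy and the fundamental group. A Lefschetz fibration $F \colon E \to B$ gives a presentation of $\pi_1(E)$ in terms of the fiber group and the monodromy, and one gets a surjection $\pi_1(E) \to \pi_1(B)$ modulo the vanishing cycles. On the other hand, the circle bundle gives the central extension
\begin{equation*}
1 \to \mathbb{Z} \to \pi_1(E) \to \pi_1(M) \to 1
\end{equation*}
(with possible finiteness of the $\mathbb{Z}$ if $e(E)$ is torsion). By Theorem \ref{asphericity}, $M$ is irreducible and aspherical (we may discard the $S^2 \times S^1$ case, which is not irreducible and is handled separately, or trivially fibers), so $\pi_1(M)$ is a good group to work with. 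I would trace the fiber class of the Lefschetz fibration through the bundle projection $\pi_* \colon H_2(E) \to H_2(M) = 0$ — note $H_2(M;\mathbb{Q})$ interacts with the Euler class — to extract a cohomology class on $M$ that is a candidate for the fibration. Concretely, the fiber of $F$ pairs with $e(E)$ in a controlled way, and pushing the symplectic (fiber) class forward should yield a nonzero class in $H^1(M;\mathbb{R})$.

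The hard part will be showing that this candidate cohomology class is genuinely fibered rather than merely nonzero, i.e.\ realizing it by a nonvanishing closed $1$-form via the Thurston norm. I expect the cleanest route is to show that the existence of the Lefschetz fibration forces either $M$ to be Seifert fibered (for instance when the fibration has no singular fibers, so $E$ is itself a surface bundle and the circle action relates to a Seifert structure on $M$), or to force the canonical/Seiberg–Witten data of $E$ into the vanishing–norm regime, in both of which cases Corollary \ref{Seifertcase} applies and delivers the fibration of $M$ immediately. Thus the main obstacle is the dichotomy step: ruling out ``exotic'' Lefschetz fibrations whose monodromy does not descend to a fibration of $M$, which I would attack using the vanishing of $\chi(E)$ and $\sigma(E)$ together with the residual finiteness and asphericity of $\pi_1(M)$ to constrain the singular fibers. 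Once every Lefschetz fibration on $E$ is shown to land in a case covered by Corollary \ref{Seifertcase}, the conclusion that $M$ fibers over $S^1$ follows.
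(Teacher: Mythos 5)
Your proposal is a strategy outline rather than a proof, and it leaves open precisely the two steps that carry all the weight. The first gap is the claim that the vanishing of $\chi(E)$ and $\sigma(E)$ ``severely restricts'' the Lefschetz fibration and rules out singular fibers. It does not: for a fiber $F$ of genus $\geq 2$ over $B=S^2$ the formula $\chi(E)=\chi(B)\chi(F)+\#\{\text{critical points}\}$ is consistent with $-2\chi(F)>0$ critical points, and the signature gives no further obstruction by itself. The paper's Lemma \ref{novanishingcycles} (following Chen--Matveyev) needs genuinely more: for $F=S^2$ one uses that $E$ is spin, for $F=T^2$ the Euler characteristic suffices, and for genus $\geq 2$ one uses Taubes' nonvanishing, the adjunction formula $K.F=\chi_-(F)$, the fact that basic classes of a circle bundle are pulled back from $M$ (Baldridge), and the Thurston norm of $\pi_*[F]$ to derive the contradiction $\chi_-(F)\leq \chi_-(F')<\chi_-(F)$ from a vanishing cycle. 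None of this appears in your sketch, and without it you cannot even assume $E\to B$ is an honest fibration.

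The second gap is the step you yourself flag as ``the hard part'': converting the candidate class in $H^1(M;\mathbb{R})$ into an actual fibration. Your plan is to force $M$ into the Seifert or vanishing-Thurston-norm regime covered by Corollary \ref{Seifertcase}, but this only works in one of the two cases. The paper's proof splits on whether the circle fiber $\gamma$, which is central in $\pi_1(E)$, dies under $p_*$. If $p_*(\gamma)=1$ then $\gamma\in\pi_1(F)$ forces $F=T^2$, and torsion-freeness of $\pi_1(M)$ (Hempel) produces an infinite cyclic normal subgroup, hence a Seifert structure, and Corollary \ref{Seifertcase} applies --- this matches your plan. But if $p_*(\gamma)\neq 1$ then centrality forces $B=T^2$, and the fibration of $M$ over $S^1$ is obtained not from the Thurston norm but from Stallings' fibering theorem applied to the extension $1\to H\to \pi_1(M)\to\mathbb{Z}\to 1$ obtained by quotienting the surface-bundle sequence by $\langle\gamma\rangle$, where $H$ is finitely generated because it contains $\pi_1(F)$ with finite index. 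This group-theoretic mechanism is absent from your proposal, and in this case $M$ need not be Seifert fibered nor have vanishing Thurston norm, so your intended reduction to Corollary \ref{Seifertcase} fails there.
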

It then follows by considering the Kodaira classification of complex surfaces that Conjecture \ref{strong} holds under the assumption that the total space admits a complex structure.

$Outline \ of \ paper$. In Section \ref{Asphericalbase} we will state the relevant vanishing result of Kronheimer-Mrowka in order to prove Theorem \ref{asphericity}. In Section \ref{Thurstonnorm} we recall the definition of the Thurston norm and quote some well known facts about it. In Section \ref{VanishingThNorm} we will use our knowledge of the Thurston norm to verify Conjecture \ref{strong} under the assumption that the base is Seifert fibered or has vanishing Thurston norm. Finally in Section \ref{The case where E admits a Lefschetz fibration} we will define Lefschetz fibrations and prove that the conjecture is true when one has a Lefschetz fibration on the total space $E$.

$Acknowledgments$.  I would like to thank Professor Dieter Kotschick for his wise and patient supervision that culminated in this paper.

\section{Asphericity of the base $M$}\label{Asphericalbase}
Throughout this article all manifolds will be closed, connected, compact and oriented and $M$ will always denote a manifold of dimension 3. In addition we will make the convention that all (co)homology groups will be taken with integral coefficients unless otherwise stated. 

In \cite{Mc} it was shown that if $M \times S^1$ is symplectic, then $M$ must be irreducible and aspherical or $S^2 \times S^1$. We extend this to the case of a nontrivial $S^1$-bundle. We first collect some relevant lemmas.
\begin{lem} \label{coveringlemma}
Let $M =M_1 \# M_2$ be a nontrivial connect sum decomposition with $b_1(M) \geq 1$, then there is a finite covering $N$ of $M$ that decomposes as a direct sum $N =N_1 \# N_2$ where $b_1(N_i) \geq k$ for any given $k$.
\end{lem}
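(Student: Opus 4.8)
The plan is to exploit the free product structure $\pi_1(M)=G_1 * G_2$, where $G_i=\pi_1(M_i)$, and to produce $N$ as the cover associated to a surjection onto a \emph{finite} group built from independent quotients of the two factors. Since the connected sum is nontrivial, neither $M_i$ is a homotopy sphere, so by the Poincar\'e conjecture both $G_i$ are nontrivial; and since $b_1(M)=b_1(M_1)+b_1(M_2)\geq 1$, after relabelling I may assume $b_1(M_1)\geq 1$. Thus $G_1$ surjects onto $\mathbb{Z}$ and hence onto $Q_1:=\mathbb{Z}/n$ for every $n$, while residual finiteness of $3$-manifold groups provides a nontrivial finite quotient $G_2\twoheadrightarrow Q_2$ with $m:=|Q_2|\geq 2$. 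Combining these gives a surjection $\psi\colon G_1 * G_2 \twoheadrightarrow Q_1\times Q_2$ sending $G_1$ to $Q_1\times\{1\}$ and $G_2$ to $\{1\}\times Q_2$; let $p\colon N\to M$ be the associated connected cover, of degree $mn$.

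To understand $N$, write $M_i'=M_i\setminus \mathrm{int}\,B^3$ and let $S\subset M$ be the connect-sum sphere, so that $M\setminus S = M_1'\sqcup M_2'$. Because $S$ is simply connected its preimage in $N$ is a disjoint union of $mn$ spheres, and cutting $N$ along them produces the components of $p^{-1}(M_1')$ and $p^{-1}(M_2')$. Counting these via double cosets, $p^{-1}(M_1')$ has $|Q_2|=m$ components and $p^{-1}(M_2')$ has $|Q_1|=n$ components; since $\ker\psi$ is normal one has $\ker\psi\cap G_i = \ker(G_i\to Q_i)$, so each component caps off to a fixed connected finite cover $\hat M_i$ of $M_i$. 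Reassembling $N$ from these pieces along the $mn$ spheres, and using that gluing closed $3$-manifolds along a connected dual graph $\Gamma$ yields the connected sum of the pieces together with $b_1(\Gamma)$ copies of $S^2\times S^1$, I obtain
\begin{equation*}
N \;\cong\; \big(\#^{m}\hat M_1\big)\,\#\,\big(\#^{n}\hat M_2\big)\,\#\,(m-1)(n-1)\,(S^2\times S^1),
\end{equation*}
where the exponent $(m-1)(n-1)=mn-(m+n)+1=b_1(\Gamma)$ comes from the connected bipartite graph on $m+n$ vertices and $mn$ edges.

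Finally I regroup the summands. Since $m\geq 2$ one has $(m-1)(n-1)\geq n-1$, which can be made as large as desired by taking $n$ large; so given $k$ I may write $(m-1)(n-1)=a+b$ with $a,b\geq k$. Setting
\begin{equation*}
N_1=\big(\#^{m}\hat M_1\big)\,\#\,a\,(S^2\times S^1),\qquad
N_2=\big(\#^{n}\hat M_2\big)\,\#\,b\,(S^2\times S^1),
\end{equation*}
gives $N=N_1\# N_2$ with $b_1(N_i)\geq k$, as required; note that the bound is forced by the $S^2\times S^1$ factors alone, so it is irrelevant whether the covers $\hat M_i$ carry any rational homology.

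I expect the main obstacle to be the structural identification of the cover in the second paragraph: one must correctly count the components of the preimages of $M_1'$ and $M_2'$ (for which normality of $\ker\psi$ and a double-coset computation are the clean tools) and, above all, account for the handles produced by the cycles in the dual graph, since it is precisely these $(m-1)(n-1)$ copies of $S^2\times S^1$---not the homology of $M_1$ or $M_2$---that drive the first Betti numbers up. Verifying that connectedness of $N$ forces $\Gamma$ to be connected, so that $b_1(\Gamma)=(m-1)(n-1)$, is the linchpin of the argument.
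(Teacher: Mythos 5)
Your proof is correct, but it reaches the conclusion by a genuinely different mechanism from the paper's. The paper argues iteratively: it first uses residual finiteness of $\pi_1(M_2)$ to pass to a $d$-fold cover in which only the $M_2$-side is unwrapped, so the dual graph of spheres is a star (a tree) and the cover is simply $(\#^d M_1)\#\tilde M_2 = M_1\# P$ with $b_1(P)\geq 1$; it then repeats this with $\mathbb{Z}_k$-covers of the $M_1$-side, so the growth of $b_1$ comes from accumulating many prime summands each contributing at least $1$ to $b_1$. You instead take a single cover, corresponding to $\ker\big(G_1*G_2\twoheadrightarrow \mathbb{Z}/n\times Q_2\big)$, whose dual graph is a connected bipartite multigraph with $m+n$ vertices and $mn$ edges, and you extract the Betti number growth entirely from the $(m-1)(n-1)$ copies of $S^2\times S^1$ created by the cycles of that graph --- the homology of the covers $\hat M_i$ plays no role. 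Your double-coset count of components, the identification of each component with a fixed cover of $M_i'$, and the graph-of-spheres formula are all correct (note only that the dual graph need not be the complete bipartite graph $K_{m,n}$, but connectivity plus the vertex and edge counts already give $b_1(\Gamma)=(m-1)(n-1)$, which is all you use). What your approach buys is a one-step construction with an explicit closed-form description of the cover and of the splitting $N=N_1\#N_2$; what the paper's approach buys is that it avoids any bookkeeping with the dual graph by keeping it a tree at each stage, at the cost of iterating the covering construction.
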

\begin{proof} It follows from Mayer-Vietoris that the Betti numbers are additive for a connect sum, hence by assumption we may assume that $b_1(M_1) \geq 1$. By the proof of geometrisation it follows that the fundamental group of a 3-manifold is residually finite (cf. \cite{H}) and hence $M_2$ has a nontrivial $d$-fold cover $\tilde{M}_2$, with $d \geq 2$. By removing a ball from $M_2$ and its disjoint lifts from $\tilde{M}_2$ and then gluing in $d$ copies of $M_1$ we obtain a cover $\tilde{M}$ of $M =M_1 \# M_2$, and by construction $\tilde{M}$ has a connect sum decomposition as $\tilde{M} = M_1 \# P$, where $b_1(P) \geq 1$. We may now take a $k$-fold cover associated to some surjective homomorphism of $\pi_1(M_1) \rightarrow \mathbb{Z}_k$ and glue in copies of $P$ to get a cover of $\tilde{M}$ (and hence of $M$), which decomposes in two pieces one of which has first Betti number at least $k$. One more application of this procedure gives the desired result.
\end{proof}

\begin{lem} \label{gysinlemma}
Let $S^1 \to E \stackrel{\pi} \rightarrow M$ be a circle bundle, whose Euler class we denote by $e(E) \in H^2(M)$, then 
\begin{enumerate}
\item $b_2(E) = \begin{cases} 
   2b_1(M) - 2,  & \mbox{if }e(E)\mbox{ is not torsion} \\
   2b_1(M), & \mbox{if }e(E)\mbox{  is torsion} 
\end{cases}$

\item $b_2^{+}(E) = b_2^{-}(E) \geq b_1(M) - 1$.
\end{enumerate}
\end{lem}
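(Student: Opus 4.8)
The plan is to read off the Betti numbers of $E$ from the Gysin sequence of the bundle, and to compute the signature of $E$ separately. Throughout I work with real coefficients, so that $e(E)$ being torsion is the same as $e = e(E) = 0$ in $H^2(M;\mathbb{R})$. The Gysin sequence of the oriented circle bundle reads
\[
\cdots \to H^{k-2}(M;\mathbb{R}) \stackrel{\cup e}{\to} H^{k}(M;\mathbb{R}) \stackrel{\pi^*}{\to} H^{k}(E;\mathbb{R}) \to H^{k-1}(M;\mathbb{R}) \stackrel{\cup e}{\to} H^{k+1}(M;\mathbb{R}) \to \cdots,
\]
where the connecting maps are cup product with $e$. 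Since $M$ is a closed oriented 3-manifold, Poincar\'e duality gives $b_0(M)=b_3(M)=1$ and $b_2(M)=b_1(M)$. When $e$ is torsion the connecting maps vanish and the sequence splits into short exact sequences $0 \to H^k(M) \to H^k(E) \to H^{k-1}(M) \to 0$, so $b_2(E) = b_2(M) + b_1(M) = 2b_1(M)$, as claimed.

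When $e$ is not torsion I extract $H^2(E;\mathbb{R})$ from the portion
\[
H^0(M) \stackrel{\cup e}{\to} H^2(M) \stackrel{\pi^*}{\to} H^2(E) \to H^1(M) \stackrel{\cup e}{\to} H^3(M),
\]
which yields $b_2(E) = \dim \mathrm{coker}(\cup e\colon H^0 \to H^2) + \dim \ker(\cup e\colon H^1 \to H^3)$. The first map sends $1 \mapsto e \neq 0$, so its cokernel has dimension $b_2(M) - 1 = b_1(M) - 1$. The crux is the second map: by Poincar\'e duality the cup product pairing $H^1(M;\mathbb{R}) \times H^2(M;\mathbb{R}) \to H^3(M;\mathbb{R}) \cong \mathbb{R}$ is nondegenerate, so pairing against the nonzero class $e$ produces a surjection $H^1 \to H^3$, whose kernel therefore has dimension $b_1(M) - 1$. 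Adding the two contributions gives $b_2(E) = 2b_1(M) - 2$.

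For part (2) the main observation is that $E$ is the unit circle bundle of an oriented plane bundle $L \to M$, hence $E = \partial W$ where $W$ is the associated disk bundle, a compact oriented 5-manifold. Since the signature is an oriented cobordism invariant, $\sigma(E) = 0$, that is $b_2^{+}(E) = b_2^{-}(E)$. Combining this with part (1) gives $b_2^{+}(E) = b_2^{-}(E) = \tfrac{1}{2}b_2(E)$, which equals $b_1(M) - 1$ in the non-torsion case and $b_1(M)$ in the torsion case; in either case $b_2^{+}(E) = b_2^{-}(E) \geq b_1(M) - 1$.

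The only genuinely substantive step is the surjectivity of $\cup e\colon H^1(M) \to H^3(M)$ for nonzero $e$; everything else is bookkeeping with the Gysin sequence together with the cobordism invariance of the signature. I expect the nondegeneracy of the Poincar\'e duality pairing on $M$ to be the key input, as it is precisely what rules out the alternative that $\cup e$ could vanish on all of $H^1(M)$ while $e$ itself is nonzero.
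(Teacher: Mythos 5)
Your proof is correct and follows essentially the same route as the paper: extracting $b_2(E)$ from the Gysin sequence (the paper likewise leaves the surjectivity of $\cup\, e\colon H^1(M;\mathbb{R})\to H^3(M;\mathbb{R})$ to Poincar\'e duality) and deducing $b_2^+(E)=b_2^-(E)$ from the fact that $E$ bounds its associated disc bundle. You have merely spelled out the details more explicitly than the paper does.
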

\begin{proof}
We consider the Gysin sequence
\begin{equation*}
H^0(M) \stackrel{\cup e} \rightarrow H^2(M) \stackrel{\pi^{*}} \rightarrow H^2(E) \stackrel{\pi_{*}} \rightarrow H^1(M) \stackrel{\cup e} \rightarrow H^3(M),
\end{equation*}
where here $e \in H^2(M)$ denotes the Euler class of the bundle. By Poincar\'e duality $H^0(M) = H^3(M) = \mathbb{Z}$ and $b_1(M) = b_2(M)$, so we conclude by exactness that $b_2(E) =  2b_1(M) - 2$ if $e$ is not torsion and $b_2(E) = 2b_1(M)$ if $e$ is torsion. Furthermore since $E$ bounds its associated disc bundle, it has zero signature and hence
\[ b_2^{+}(E) = b_2^{-}(E) \geq b_1(M) - 1.\]
\end{proof}

We will need to appeal to a vanishing result for the Seiberg-Witten invariants of manifolds that decompose along $S^2 \times S^1$, which we take from \cite{KM}. For this we will need to define a relative notion of $b_2^{+}$ for an oriented 4-manifold $X$ with boundary. This is done by considering the symmetric form induced on rational cohomology that is obtained as the composition
\begin{equation*}
H^2(X, \partial{X})  \times H^2(X, \partial{X})  \stackrel{i^* \times Id} \rightarrow H^2(X) \times H^2(X, \partial{X}) \stackrel{\cup} \rightarrow \mathbb{Q} \ .
\end{equation*}
Here the map $i^*$ is the map coming from the long exact sequence of the pair $(X, \partial{X})$ and the second map is non-degenerate by Poincar\'e duality. This is then a symmetric, possibly degenerate, form on $H^2(X, \partial X)$  and we define $b_2^{+}(X)$ to be the dimension of a maximal positive definite subspace.
\begin{thm}[Kronheimer-Mrowka, \cite{KM}]\label{KronMro}
Let $X = X_1 \bigcup_{\partial{X_1} = \partial{X_2}} X_2$ where $\partial{X_1} = - \partial{X_2} = S^2 \times S^1$ and $b_2^{+}(X_1), b_2^{+}(X_2) \geq 1$. Then for all $Spin^c$-structures $\xi$
\[ \sum_{\xi^* - \xi \in \ Tor} SW(\xi^*)=0.\]
\end{thm}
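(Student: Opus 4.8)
The plan is to prove the vanishing by a neck-stretching argument in Seiberg--Witten theory, reducing the closed invariant of $X$ to a pairing of relative invariants of the two pieces in the monopole Floer homology of the separating three-manifold $Y = S^2 \times S^1$. I would give $Y$ the product of the round metric on $S^2$ with the standard metric on $S^1$, which has positive scalar curvature, and equip $X$ with a family of metrics $g_T$ containing an isometric copy of the long cylinder $[-T,T] \times Y$ near the separating hypersurface. Since $b_2^+(X) \geq 1$ (the positive-definite parts of the two pieces survive in $X$), each $SW(\xi^*)$ is independent of the metric, so it suffices to study the limit $T \to \infty$.

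The central analytic input is that positive scalar curvature forbids irreducible monopoles on $Y$. The Weitzenb\"ock formula for the Dirac operator shows that any finite-energy solution of the three-dimensional Seiberg--Witten equations on $Y$ has vanishing spinor and is therefore reducible; the reducible locus is the Jacobian circle $H^1(Y;\mathbb{R})/H^1(Y;\mathbb{Z}) \cong S^1$ of flat connections. Consequently the reduced Floer homology of $Y$ vanishes and the relevant Floer groups come entirely from this reducible family (the ``bar'' flavor $\overline{HM}$). I would first dispose of the easy case: all the Spin$^c$ structures $\xi^*$ in the sum restrict to one and the same structure $\mathfrak{s}_Y$ on $Y$, because $H^2(Y)$ is torsion-free; if $\mathfrak{s}_Y$ is not the torsion structure, then the stretched neck carries no solutions at all and each term already vanishes.

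It remains to treat the torsion $\mathfrak{s}_Y$. Here I would invoke the gluing/pairing theorem: as $T \to \infty$ solutions on $X$ converge to pairs of finite-energy solutions on the cylindrical-end manifolds $X_1^{\infty}$ and $X_2^{\infty}$ with matching reducible limits on $Y$, and the count assembling $SW$ is computed by pairing the relative invariants of $X_1$ and $X_2$ across the reducible part $\overline{HM}(Y)$. The total over all extensions of $\mathfrak{s}_Y$ to $X$ differing by a torsion class, which is exactly $\sum_{\xi^* - \xi \in Tor} SW(\xi^*)$, is the value of this pairing. Finally the hypothesis $b_2^+(X_i) \geq 1$ enters through wall-crossing: on a manifold with $b_2^+ \geq 1$ and a cylindrical end a generic perturbation admits no reducible solution, so the relative invariant of each side factors through the image of $\widehat{HM}(Y)$, and the pairing of two such classes through $\overline{HM}(Y)$ is forced to vanish.

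The hard part will be making the gluing formula precise and carrying out the Floer computation for $S^2 \times S^1$ --- in particular matching the torsion sum on $X$ with the reducible pairing on $Y$, and checking that the two-sided condition $b_2^+(X_i) \geq 1$ genuinely annihilates the reducible contribution rather than merely constraining it. Since this is precisely the package established in \cite{KM}, for the purposes of the present paper I would quote their theorem rather than reconstruct the full monopole Floer apparatus.
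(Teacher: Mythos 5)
Your proposal follows essentially the same route as the paper: both reduce the statement to the Kronheimer--Mrowka gluing formula (their formula 3.27), which writes the torsion-summed invariant as a pairing of relative invariants of $X_1$ and $X_2$ through the monopole Floer groups of $S^2 \times S^1$, and both kill that pairing using the positive scalar curvature of $S^2 \times S^1$ together with the hypothesis $b_2^{+}(X_i) \geq 1$ (the paper by citing Propositions 3.10.3 and 3.10.4 of \cite{KM}, you by the vanishing of the reduced Floer group). The outline is correct, and since, like the paper, you ultimately defer the gluing analysis to \cite{KM}, there is nothing substantive to add.
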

Although it is not explicitly stated in book \cite{KM}, Theorem \ref{KronMro} can be deduced as follows: formula $3.27$ (p.73) defines the sum of the SW invariants of all $Spin^c$- structures that differ by torsion as given by a pairing of certain Floer groups and these groups are zero for $S^2 \times S^1$ by Proposition 3.10.3 in the case of an untwisted coefficient system and by Proposition 3.10.4 in the twisted case.

Theorem \ref{KronMro} then implies certain restrictions on the decomposition of symplectic manifolds along $S^2 \times S^1$, which is similar but slightly weaker than the results one obtains for a connected sum.

\begin{prop}\label{vanishingSW}
A symplectic manifold $X$ cannot be decomposed as $X = X_1 \bigcup_{\partial{X_1} = \partial{X_2}} X_2$, where $\partial{X_1} = - \partial{X_2} = S^2 \times S^1$ and $b_2^{+}(X_1), b_2^{+}(X_2) \geq 1$.
\end{prop}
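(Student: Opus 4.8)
The plan is to argue by contradiction, playing the Kronheimer--Mrowka vanishing of Theorem \ref{KronMro} against the nonvanishing of the Seiberg--Witten invariant of the canonical $Spin^c$ structure of a symplectic manifold established by Taubes in \cite{Tau2}. So suppose $X = X_1 \cup_{S^2 \times S^1} X_2$ is such a decomposition with $b_2^{+}(X_i) \geq 1$.

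First I would check that $b_2^{+}(X) > 1$, which is needed both for Theorem \ref{KronMro} to have content and for Taubes' theorem to apply. This should follow from superadditivity of $b_2^{+}$ under gluing along $S^2 \times S^1$. The natural maps $H^2(X_i, \partial X_i; \mathbb{Q}) \to H^2(X; \mathbb{Q})$, obtained from the excision isomorphism $H^2(X_i, \partial X_i) \cong H^2(X, X_{3-i})$ followed by the map of the pair $(X, X_{3-i})$, send maximal positive subspaces for the relative forms defined above to subspaces of $H^2(X; \mathbb{Q})$ on which the intersection form of $X$ restricts to the relative form. The images coming from $X_1$ and $X_2$ are mutually orthogonal, as they have disjoint support, so these maps are injective on the positive-definite parts and $b_2^{+}(X) \geq b_2^{+}(X_1) + b_2^{+}(X_2) \geq 2$.

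Next, since $(X, \omega)$ is symplectic with $b_2^{+}(X) > 1$, Taubes' theorem gives $SW(\mathfrak{s}_{can}) = \pm 1$ for the canonical $Spin^c$ structure $\mathfrak{s}_{can}$, together with the constraint that every basic class $\mathfrak{s}$ satisfies $|c_1(\mathfrak{s}) \cdot [\omega]| \leq c_1(\mathfrak{s}_{can}) \cdot [\omega]$, the extremal value being realized only by $\mathfrak{s}_{can}$ and its conjugate. I would then apply Theorem \ref{KronMro} with $\xi = \mathfrak{s}_{can}$. Any $\xi^*$ with $\xi^* - \mathfrak{s}_{can} \in Tor$ has $c_1(\xi^*) \cdot [\omega] = c_1(\mathfrak{s}_{can}) \cdot [\omega]$, since a torsion class pairs trivially with the real class $[\omega]$; hence each such $\xi^*$ realizes the extremal pairing, so $SW(\xi^*) = 0$ unless $\xi^* = \mathfrak{s}_{can}$. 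The torsion-sum therefore collapses to $\sum_{\xi^* - \mathfrak{s}_{can} \in Tor} SW(\xi^*) = SW(\mathfrak{s}_{can}) = \pm 1 \neq 0$, contradicting Theorem \ref{KronMro} and finishing the argument.

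The crux is the last step, namely being certain that no other $Spin^c$ structure in the torsion coset of $\mathfrak{s}_{can}$ contributes and cancels the $\pm 1$. When $c_1(\mathfrak{s}_{can}) \cdot [\omega] \neq 0$ this is immediate, because the only other structure realizing an extremal pairing is the conjugate $\bar{\mathfrak{s}}_{can}$, whose pairing has the opposite sign and so does not lie in the coset. The delicate borderline is the symplectic Calabi--Yau situation, where the canonical class is torsion and $\mathfrak{s}_{can}, \bar{\mathfrak{s}}_{can}$ can share a torsion coset; here one must fall back on the conjugation symmetry of the Seiberg--Witten invariants and observe that cancellation is obstructed unless the conjugation sign is $-1$ with $\mathfrak{s}_{can} \neq \bar{\mathfrak{s}}_{can}$, so that this residual case needs to be treated separately. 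I expect this torsion bookkeeping, rather than the homological additivity of $b_2^{+}$ or the invocation of Taubes, to be the main obstacle.
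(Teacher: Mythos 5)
Your argument follows the same route as the paper's: superadditivity of $b_2^{+}$ under the splitting (which the paper asserts and you justify correctly), Taubes' nonvanishing $SW(\xi_{can})=\pm 1$, and the collapse of the torsion-coset sum in Theorem \ref{KronMro} to the single term $SW(\xi_{can})$. The one place you stop short is the ``residual case'' you flag at the end, and it is not actually residual: the constraint in \cite{Tau2} is stronger than the paraphrase you use. Writing a basic class as $\xi^{*}=\xi_{can}\otimes E$, Taubes shows $0\le E\cdot[\omega]\le K\cdot[\omega]$ with equality on the \emph{left} only if $E$ is the trivial bundle, i.e.\ $E=0$ in $H^{2}(X;\mathbb{Z})$ --- not merely $E\cdot[\omega]=0$. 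Since any $E$ in the torsion coset of $\xi_{can}$ automatically satisfies $E\cdot[\omega]=0$, this forces $\xi^{*}=\xi_{can}$ outright; in particular, when the canonical class is torsion it forces $K=0$, so $\bar{\xi}_{can}=\xi_{can}$ and there is no conjugate pair available to cancel, and no separate Calabi--Yau case is needed. This is precisely the one-line citation the paper makes, and with that correction your proof is complete and coincides with the paper's.
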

\begin{proof}
By the hypotheses of the proposition, we conclude from Theorem \ref{KronMro} that for every $Spin^c$-structure $\xi \in Spin^c(X)$
\[ \sum_{\xi^* - \xi \in \ Tor} SW(\xi^*)=0.\]
However as $X$ is symplectic and 
\[ b_2^+(X) \geq b_2^{+}(X_1) + b_2^{+}(X_2) \geq 2\]
the nonvanishing result of Taubes implies $SW(\xi_{can}) = \pm 1$, where $\xi_{can}$ denotes the canonical $Spin^c$-structure associated to the symplectic structure on $E$ (cf. \cite{Tau}). Moreover it follows from the constraints on SW basic classes of a symplectic manifold of \cite{Tau2} that if $\xi^*$ is another $Spin^c$-structure with non-trivial SW invariant and $\xi_{can} - \xi^* \in Tor$ then in fact $\xi_{can} = \xi^*$. Hence
\[ \sum_{\xi^* -  \xi_{can} \in  \ Tor} SW(\xi^*)= \pm 1\]
which is a contradiction.
\end{proof}

\begin{thm}\label{asphericity}
Let $M$ be an oriented, closed 3-manifold, so that some circle bundle $S^1 \rightarrow E \stackrel{\pi} \rightarrow M$ admits a symplectic structure, then $M$ is irreducible and aspherical or $M = S^2 \times S^1$ and the bundle is trivial.
\end{thm}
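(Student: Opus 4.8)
The plan is to first pin down the first Betti number of the base, then rule out nontrivial connected sums using the Seiberg--Witten vanishing of Proposition \ref{vanishingSW}, and finally translate primeness into the stated dichotomy. Since $E$ carries a symplectic form we have $b_2^+(E) \geq 1$. Feeding this into Lemma \ref{gysinlemma} already forces $b_1(M) \geq 1$: if $b_1(M) = 0$ then $H^2(M)$ is entirely torsion, so $e(E)$ is torsion and part (1) of the lemma gives $b_2(E) = 2b_1(M) = 0$, whence $b_2^+(E) = 0$, a contradiction. Thus $b_1(M) \geq 1$, which is exactly the hypothesis needed to invoke Lemma \ref{coveringlemma}.

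Next I would show $M$ is prime. Suppose instead that $M = M_1 \# M_2$ is a nontrivial connected sum. By Lemma \ref{coveringlemma} there is a finite cover $p \colon N \to M$ with $N = N_1 \# N_2$ and $b_1(N_i)$ as large as we wish. Pulling back the bundle and the symplectic form along $p$ yields a symplectic circle bundle $\tilde E \to N$, which is a finite cover of $E$ on which the pullback of $\omega$ is again symplectic. Let $S \subset N$ be the connected-sum $2$-sphere; since $S$ separates $N$ it is null-homologous, so $\langle e(\tilde E), [S] \rangle = 0$ and the restricted Euler class in $H^2(S) = \mathbb{Z}$ vanishes. Hence the circle bundle over $S$ is trivial and the preimage of $S$ in $\tilde E$ is $S^2 \times S^1$. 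Writing $\tilde E_i$ for the restriction of the bundle to $N_i \setminus B^3$, we obtain a decomposition
\[ \tilde E = \tilde E_1 \bigcup_{\partial \tilde E_1 = \partial \tilde E_2} \tilde E_2, \qquad \partial \tilde E_1 = -\partial \tilde E_2 = S^2 \times S^1. \]
If both pieces satisfy $b_2^+(\tilde E_i) \geq 1$, then Proposition \ref{vanishingSW} contradicts the fact that $\tilde E$ is symplectic, proving that no such decomposition exists, so that $M$ is prime.

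The main obstacle is the estimate $b_2^+(\tilde E_i) \geq 1$ for the relative $b_2^+$ defined above, and it is here that the large first Betti numbers are used. The cleanest route is to cap off: glue the trivial bundle $D^3 \times S^1$ over the removed ball to $\tilde E_i$ along $S^2 \times S^1$, producing a closed circle bundle $\hat E_i$ over $N_i$. By Lemma \ref{gysinlemma} we have $b_2^+(\hat E_i) \geq b_1(N_i) - 1$. Since both $D^3 \times S^1$ and the gluing region $S^2 \times S^1$ have bounded homology, a Mayer--Vietoris comparison shows that capping changes $b_2^+$ by at most a universal constant, so $b_2^+(\tilde E_i) \geq b_1(N_i) - C$. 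Choosing $b_1(N_i)$ large enough in Lemma \ref{coveringlemma} then guarantees $b_2^+(\tilde E_i) \geq 1$. Making the comparison of the relative and absolute positive cones precise is the step that requires the most care.

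Finally I would extract the dichotomy. An orientable prime $3$-manifold is either irreducible or diffeomorphic to $S^2 \times S^1$. In the latter case $H^2(M) = \mathbb{Z}$ is torsion-free, so if $e(E) \neq 0$ it is non-torsion and Lemma \ref{gysinlemma}(1) gives $b_2(E) = 2b_1(M) - 2 = 0$, contradicting $b_2^+(E) \geq 1$; hence $e(E) = 0$ and the bundle is trivial. In the irreducible case, $b_1(M) \geq 1$ forces $\pi_1(M)$ to be infinite, so the universal cover is a non-compact, simply connected, irreducible $3$-manifold with $\pi_2 = 0$ by the sphere theorem, hence contractible, and therefore $M$ is aspherical. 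This completes the plan.
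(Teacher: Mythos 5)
Your proposal is correct and follows essentially the same route as the paper: force $b_1(M)\geq 1$ from Lemma \ref{gysinlemma}, pass to a cover with large $b_1(N_i)$ via Lemma \ref{coveringlemma}, split the pulled-back bundle along the trivialised bundle over the gluing sphere, and contradict Proposition \ref{vanishingSW}, then run the sphere-theorem/Hurewicz argument for asphericity. The only (minor) divergence is at the $b_2^+$ comparison: the paper caps off exactly as you do but gets the sharp inequality $b_2^+(\tilde E_i) \geq b_2^+(\hat E_i)$ outright --- since $H^2(D^3\times S^1)=0$, excision gives $H^2(\tilde E_i,\partial \tilde E_i)\cong H^2(\hat E_i, D^3\times S^1)$ surjecting onto $H^2(\hat E_i)$ compatibly with the cup-product pairing --- so no ``universal constant'' loss occurs and $b_1(N_i)\geq 2$ already suffices.
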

\begin{proof}
We first show that $M$ must be prime. Since $E$ is symplectic it follows from Lemma \ref{gysinlemma} that $b_1(M) \geq 1$. Assume that $M = M_1 \# M_2$ is a nontrivial connected sum, then by taking a suitable covering as in Lemma \ref{coveringlemma} and pulling back $E$ and its symplectic form we may assume without loss of generality that $b_1(M_i) \geq 2$. We let $S$ denote the gluing sphere of the connected sum, then as $S$ is nullhomologous the bundle restricted to $S$ is trivial. Thus the connect sum decomposition induces a decomposition $E = E_1 \bigcup_{S^2 \times S^1} E_2$. Since the bundles $E_i \to M_i/B^3$ are trivial on the boundary we may extend them to bundles $\tilde{E}_i \to M_i$ and as $b_1(M_i) \geq 2$,  Lemma \ref{gysinlemma} implies that $b_2^+(\tilde{E}_i) \geq 1$. Further, since $E_i \simeq \tilde{E}_i / S^1 \times pt$ we have that
\[b_2^+(E_i) \geq b_2^+(\tilde{E}_i) \geq 1,\]
which then contradicts Proposition \ref{vanishingSW}. Hence $M$ is prime, and thus irreducible or $S^2 \times S^1$. 

We assume that $M$ is irreducible, then by the sphere theorem $\pi_2(M) = 0$. Since $b_1(M) \geq 1$, we have that $\pi_1(M)$ is infinite so the universal cover $\tilde{M}$ of $M$ is not compact and has $\pi_i(\tilde{M})$ trivial for $i =1,2$. The Hurewicz theorem then implies that the first nontrivial $\pi_i(\tilde{M})$ is isomorphic to $H_i(\tilde{M})$. But since $\tilde{M}$ is not compact $H_3(\tilde{M}) = 0$ and as $\tilde{M}$ is 3-dimensional $H_i(\tilde{M}) = 0$ for all $i \geq 4$. Hence $\pi_i(\tilde{M}) = 0$ for all $i \geq 1$ and it follows from Whitehead's Theorem that $\tilde{M}$ is contractible, that is $M$ is aspherical.

In the case where $M = S^2 \times S^1$ any symplectic bundle must be trivial by Lemma \ref{gysinlemma}.
\end{proof}
\begin{rem}\label{McProof}
One may also give a proof of Theorem \ref{asphericity} that uses covering construction in \cite{Mc}. In order to do this one first takes finite coverings on each of the two pieces in the connect sum decomposition. Then one glues these together to find a covering $\tilde{M}$ where the sphere of the connect sum lifts to a sphere that is nontrivial in real cohomology. This sphere then lifts to the total space of the pullback bundle $\tilde{E}$ over $\tilde{M}$. One may also assume by Lemma \ref{coveringlemma} that $b_1(\tilde{M})$ is large and hence $b_2^+(\tilde{E})$ is large. Then a standard vanishing theorem for the SW invariants implies that all invariants are zero, which then contradicts Taubes' result if $E$ and hence $\tilde{E}$ is symplectic.
\end{rem}

By considering the long exact homotopy sequence we have the following corollary that was first proved by Kotschick in \cite{Kot2}.
\begin{cor}\label{nocontractibleorbits}
Let $S^1 \to E \stackrel{\pi} \rightarrow M$ be a symplectic circle bundle over an oriented 3-manifold $M$. Then the map $\pi_1(S^1) \to \pi_1(E)$ induced by the inclusion of the fiber is injective. In particular a fixed point free circle action on a symplectic 4-manifold can never have contractible orbits.
\end{cor}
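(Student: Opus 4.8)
The plan is to read off the result from the long exact homotopy sequence of the fibration $S^1 \to E \stackrel{\pi}\rightarrow M$, using Theorem \ref{asphericity} to control the homotopy of the base. First I would write down the relevant segment
\[
\pi_2(E) \to \pi_2(M) \stackrel{\partial}\rightarrow \pi_1(S^1) \stackrel{i_*}\rightarrow \pi_1(E) \to \pi_1(M) \to 0,
\]
where $i_*$ is the map induced by the inclusion of the fiber and the sequence terminates because $\pi_0(S^1) = 0$. By exactness, $i_*$ is injective precisely when the connecting homomorphism $\partial \colon \pi_2(M) \to \pi_1(S^1)$ vanishes, so the whole statement reduces to showing that $\partial = 0$.

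Next I would invoke Theorem \ref{asphericity}, which leaves exactly two possibilities for the base. If $M$ is irreducible and aspherical, then $\pi_2(M) = 0$ and $\partial = 0$ trivially, so $i_*$ is injective. If instead $M = S^2 \times S^1$ with $E$ the trivial bundle, then $E = M \times S^1$ and the projection to the $S^1$ factor provides a section of $\pi$; this makes $\pi_2(E) \to \pi_2(M)$ surjective and hence forces $\partial = 0$ by exactness. (Equivalently, one sees directly that $\pi_1(E) = \pi_1(M) \times \pi_1(S^1)$, with $i_*$ the inclusion of the second factor.) This settles the first assertion in both cases.

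Finally, for the statement about circle actions, I would specialise to a free circle action on a symplectic four-manifold $E$: the quotient $M = E/S^1$ is then a closed oriented three-manifold and $\pi \colon E \to M$ is a principal circle bundle whose fibers are exactly the orbits. Since $E$ is symplectic, the injectivity just established shows that the class of an orbit generates an infinite cyclic subgroup of $\pi_1(E)$, so no orbit can be null-homotopic.

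The argument is short because the substantive work has already been carried out in Theorem \ref{asphericity}; the only point requiring a moment's care is the $S^2 \times S^1$ case, where one should argue that triviality of the bundle makes the connecting homomorphism vanish, rather than attempting to compute $\partial$ by hand.
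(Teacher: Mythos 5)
Your argument is correct and is exactly the paper's (essentially unwritten) proof: the long exact homotopy sequence of the fibration together with Theorem \ref{asphericity}, which kills $\pi_2(M)$ in the aspherical case and reduces the remaining case to the trivial bundle over $S^2\times S^1$. The only caveat is in the final clause: the corollary speaks of \emph{fixed point free} actions, which may have finite isotropy, whereas your deduction (like the cited reference of Kotschick) treats only \emph{free} actions, where the quotient is an honest circle bundle; for a non-free fixed-point-free action one needs the extra observation that a principal orbit is freely homotopic to a finite multiple of any exceptional orbit, so a contractible orbit forces contractible principal orbits, before any bundle-type argument can be brought to bear --- a gap the paper's own one-line proof shares.
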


\section{The Thurston norm}\label{Thurstonnorm}
In this section we will define and collect several relevant facts about the Thurston norm. We first define the negative Euler characteristic or \emph{complexity} of a possibly disconnected, orientable surface $\Sigma = \bigsqcup_i\Sigma_i$ to be
\begin{equation*}
\chi_{-}(\Sigma) =   \sum_{\chi(\Sigma_i) \leq 0} -\chi(\Sigma_i)
\end{equation*}
where $\chi$ denotes the Euler characteristic of the surface. 

Next we define the Thurston norm $||\ ||_T$ as a map on $H_1(M)$ by
\begin{equation*}
||\sigma||_T =  \min \{\chi_{-}(\Sigma) \ |\ PD(\Sigma) = \sigma \}.
\end{equation*}
It is a basic fact that this map extends uniquely to a (semi)norm on $H^1(M, \mathbb{R})$, which we will denote again by $||\ ||_T$. One particularly important property of the Thurston norm is that its unit ball, which we denote by $B_T$, is a (possibly noncompact) convex polytope with finitely many faces. If $B_{T^*}$ denotes the unit ball in the dual space we have the following characterisation of $B_T$.
\begin{thm}[\cite{Th}, p. 106]\label{unitball}
The unit ball $B_{T^*}$ is a polyhedron whose vertices are integral lattice points, $\pm\beta_1,..., \pm\beta_k$ and the unit ball $B_T$ is defined by the following inequalities
\begin{equation*}
B_T = \{\alpha \ | \ |\beta_i(\alpha)| \leq 1,\  1 \leq i \leq k\}.
\end{equation*}
\end{thm}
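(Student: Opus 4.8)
The plan is to split the statement into two logically independent assertions and prove them in turn: first, that $\chi_{-}$ descends to a genuine seminorm $\|\cdot\|_T$ on $H^1(M;\mathbb{R})$, and second, that the resulting unit ball is an integral polytope in the claimed sense. Throughout I would work Poincar\'e-dually, representing an integral class by an embedded oriented surface $\Sigma$ and using that $\chi_{-}(\Sigma)$ is a non-negative integer, so that the function being extended is already integer-valued on the lattice $L = H^1(M;\mathbb{Z})$; since $M$ is closed the representatives are closed surfaces, so in fact $\chi_{-}$ takes only \emph{even} non-negative integer values on $L$.

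For the seminorm part the geometric heart is subadditivity. Given norm-minimizing embedded surfaces $\Sigma_1,\Sigma_2$ representing classes $\alpha_1,\alpha_2$, I would put them in general position and perform the oriented cut-and-paste (double-curve sum) along $\Sigma_1\cap\Sigma_2$; this yields an embedded surface representing $\alpha_1+\alpha_2$ with the same total Euler characteristic, and discarding sphere and disk components only decreases $\chi_{-}$, giving $\|\alpha_1+\alpha_2\|_T\le\|\alpha_1\|_T+\|\alpha_2\|_T$. Homogeneity $\|n\alpha\|_T=n\|\alpha\|_T$ for $n>0$ has an easy direction (take $n$ parallel copies of a minimizer) and a hard direction: one must rule out that a minimizer for $n\alpha$ is strictly cheaper than $n$ copies of a minimizer for $\alpha$. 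I would deduce this from the fact that norm-minimizing surfaces are incompressible and $\chi_{-}$-taut, so their complexity behaves additively under passage to multiples. Subadditivity together with $\mathbb{Z}$-homogeneity and symmetry forces convexity and positive homogeneity over $\mathbb{Q}$, and since $\|\cdot\|_T$ is bounded above by a multiple of any fixed norm on the lattice, it extends by continuity to a seminorm on $H^1(M;\mathbb{R})$.

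For the polytope part I would describe the dual ball directly as
\[ B_{T^*}=\{\phi\ :\ \phi(\alpha)\le\|\alpha\|_T \ \text{ for all }\ \alpha\in L\}, \]
an intersection of half-spaces, each cut out by an integer vector $\alpha$ and an integer value $\|\alpha\|_T$. Any vertex of $B_{T^*}$ is then the solution of a linear system with integer coefficients, hence rational; moreover, on the cone over a top-dimensional face the seminorm agrees with a single supporting functional $\beta_i$, and the fact that it takes integer (indeed even) values at the lattice points of that full-dimensional cone forces $\beta_i$ onto the integer lattice. Finiteness of the faces is the remaining point: subadditivity makes the constraint coming from any class $\alpha$ that admits a nontrivial norm-additive decomposition $\alpha=\alpha_1+\alpha_2$ redundant, so only the $\|\cdot\|_T$-indecomposable directions supply active faces. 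Dualizing the resulting description recovers exactly $B_T=\{\alpha : |\beta_i(\alpha)|\le 1,\ 1\le i\le k\}$.

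The main obstacle is the finiteness step: rationality and integrality of the vertices are essentially formal once the seminorm and its integer values are in hand, but bounding the number of indecomposable supporting directions—equivalently, showing that only finitely many half-spaces are essential—genuinely uses the integer-valued subadditive structure, and it is here that Thurston's structural analysis of norm-minimizing surfaces does the real work. A secondary difficulty is the lower bound in the homogeneity step, since excluding a cheaper representative for a multiple class requires the incompressibility and tautness of minimizers rather than any formal manipulation.
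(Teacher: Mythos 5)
The paper does not actually prove this statement: it is quoted verbatim as background from Thurston's memoir (\cite{Th}, p.~106), so there is no in-paper argument to compare against. That said, your decomposition into two halves --- (i) $\chi_-$ descends to an integer-valued seminorm on $H^1(M;\mathbb{R})$, and (ii) any seminorm taking integer values on a lattice has a polyhedral unit ball cut out by finitely many integral functionals --- is exactly the architecture of Thurston's own proof, and most of the sketch is sound.

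Three of your mechanisms are off, however. First, in the subadditivity step the danger is not that you must \emph{discard} sphere components but that the oriented double-curve sum may \emph{create} them: a sphere contributes $0$ to $\chi_-$ but $+2$ to $\chi$, so its appearance forces the remaining components to carry more negative Euler characteristic than $\chi(\Sigma_1)+\chi(\Sigma_2)$, and the desired inequality $\chi_-(\Sigma)\le\chi_-(\Sigma_1)+\chi_-(\Sigma_2)$ would fail. One must first isotope $\Sigma_1,\Sigma_2$ so that no circle of intersection bounds a disk on either surface; that is where the genuine surface topology sits. Second, the hard direction of homogeneity, $\|n\alpha\|_T\ge n\|\alpha\|_T$, does not come from incompressibility or tautness of minimizers: since every loop meets a surface $S$ dual to $n\alpha$ with intersection number divisible by $n$, the components of $M\setminus S$ carry a well-defined labelling by $\mathbb{Z}/n$, and $S$ splits as a disjoint union of $n$ subsurfaces each dual to $\alpha$; summing $\chi_-$ over the pieces gives the bound. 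Third, you have the division of labour in part (ii) backwards: once the seminorm is known to take integer values on the lattice, finiteness of the faces is purely formal convex geometry --- the dual ball is compact modulo the degenerate subspace, its extreme points are shown to be lattice points, and a compact set contains only finitely many lattice points --- so no structural analysis of norm-minimizing surfaces enters there. Your ``indecomposable directions'' heuristic is neither Thurston's argument nor obviously sufficient, since a priori there could be infinitely many indecomposable lattice directions.
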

We are interested in understanding how a manifold fibers over $S^1$ and the following theorem says that the Thurston norm determines precisely which cohomology classes can be represented by fibrations.
\begin{thm}[\cite{Th}, p. 120]\label{fiberedfaces}
Let $M$ be a compact, oriented 3-manifold. The set $F$ of cohomology classes in $H^1(M, \mathbb{R})$ representable by nonsingular closed 1-forms is the union of the open cones on certain top-dimensional open faces of $B_T$, minus the origin. The set of elements in $H^1(M,\mathbb{Z})$ whose Poincar\'e dual is represented by the fiber of some fibration consists of the set of lattice points in $F$.
\end{thm}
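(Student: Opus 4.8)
The plan is to translate freely between nonsingular closed $1$-forms and fibrations, and then to extract linearity of the Thurston norm on the locus $F$ from the Euler class of the plane field tangent to the associated foliation. First I would set up the dictionary. If $\phi\colon M\to S^1$ is a fibration then $\omega=\phi^{*}d\theta$ is a nonsingular closed $1$-form whose class is the integral class Poincar\'e dual to the fiber; conversely, if $\omega$ is nonsingular and closed with integral periods, integrating it gives a submersion $M\to \mathbb{R}/\mathbb{Z}$ (its differential is $\omega\neq 0$), which by Ehresmann's theorem on the compact $M$ is a fibration with fiber dual to $[\omega]$. This reduces the second assertion to identifying the lattice points of $F$ with integral classes carrying a nonsingular closed representative. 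I would also record that $F$ is \emph{open}: nonsingularity is a $C^{0}$-open condition on $1$-forms, while the de Rham class map from closed $1$-forms onto $H^1(M,\mathbb{R})$ is surjective and linear (hence open), so a small perturbation of a nonsingular representative stays nonsingular and realises every class near $[\omega]$.

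The heart of the matter is to compute $||\ ||_T$ on $F$. For $\omega\in F$ the distribution $\xi=\ker\omega$ is an oriented $2$-plane field, and for an integral class with dual fiber $\Sigma$ the restriction $\xi|_{\Sigma}$ is exactly $T\Sigma$, so $\chi(\Sigma)=\langle e(\xi),[\Sigma]\rangle$. The key input is that $\Sigma$, being a leaf of the foliation by fibers, \emph{minimizes} complexity in its class. I would establish this via Thurston's inequality: for the taut foliation $\mathcal{F}$ tangent to $\xi$ and \emph{any} embedded surface $S$ with no $S^{2}$ or $\mathbb{RP}^{2}$ components one has $|\langle e(\xi),[S]\rangle|\le\chi_{-}(S)$, with equality when $S$ is a leaf. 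Consequently $||\alpha||_T=\chi_{-}(\Sigma)=-\langle e(\xi),PD(\alpha)\rangle$ is the restriction of the linear functional $-e(\xi)$ to the dual-fiber classes. Since perturbing $\omega$ inside $F$ varies $\xi$ through a continuous family of plane fields, $e(\xi)$ is locally constant on each component of $F$, so there $||\ ||_T$ agrees with a \emph{fixed} linear functional.

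A (semi)norm that is linear on an open set must there equal a supporting functional of its unit ball, that is, one of the dual vertices $\pm\beta_{i}$ of Theorem \ref{unitball}; the region where $||\ ||_T=\beta_{i}$ is precisely the open cone over the top-dimensional face of $B_T$ cut out by $\beta_{i}$. Thus each component of $F$ lies in such an open cone, giving $F\subseteq\bigcup(\text{open cones over top faces})\smallsetminus\{0\}$. For the reverse inclusion I would combine openness with the integral case: starting from a fibration on a given top face, perturbing its nonsingular form by a small multiple of the difference of classes keeps the form within the linear region, hence nonsingular, so every integral class in that cone is fibered; rational and then real classes follow by density and openness. This identifies $F$ with the union of the open cones over the fibered top faces and, via the dictionary above, pins down the lattice points as exactly the fibered classes.

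The main obstacle is Thurston's inequality $|\langle e(\xi),[S]\rangle|\le\chi_{-}(S)$ together with the identification of leaves as its equality case; the subsequent convex-geometry bookkeeping is essentially formal. I would prove the inequality by putting $S$ in general position with respect to $\mathcal{F}$ and analysing the singular $1$-dimensional foliation cut on $S$ by $\mathcal{F}\cap S$: by Poincar\'e--Hopf, $\chi(S)$ is the sum of the tangency indices, and tautness — the absence of Reeb components, guaranteed here since the leaves are fibers met by a closed transversal — forces these indices to carry a consistent sign, converting the index count into the asserted Euler-class bound. Controlling these tangency indices is where the genuine work lies.
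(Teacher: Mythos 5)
First, a remark on the comparison you asked for: the paper does not prove this statement at all --- it is quoted from Thurston's memoir \cite{Th} and used as a black box --- so your sketch has to be measured against Thurston's original argument rather than anything in the paper. The forward half of your plan is sound and is essentially Thurston's: the Tischler-type dictionary between fibrations over $S^1$ and integral nonsingular closed $1$-forms, the openness of $F$, the identity $\chi(\Sigma)=\langle e(\xi),[\Sigma]\rangle$ for a fiber $\Sigma$ of the foliation tangent to $\xi=\ker\omega$, Thurston's inequality $|\langle e(\xi),[S]\rangle|\le\chi_{-}(S)$, and the convex-geometry step that a seminorm agreeing with a linear functional on an open set confines that set to the cone over a single top-dimensional face. (Two smaller cautions: the locus where $\|\cdot\|_T$ coincides with $\beta_i$ is the \emph{closed} cone over the corresponding face, not the open one; and you should address the degenerate case where the seminorm has nontrivial kernel, e.g.\ torus bundles over $S^1$, where ``top-dimensional face'' requires interpretation.)

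The genuine gap is in the reverse inclusion, which is the hard half of the theorem: you must show that $F$ contains the \emph{entire} open cone over any face it meets. Your justification --- ``perturbing its nonsingular form by a small multiple of the difference of classes keeps the form within the linear region, hence nonsingular'' --- is circular. Lying in the region where the norm is linear is a property of the cohomology class; it does not imply that the class admits a nonsingular closed representative, and that implication is precisely what is to be proved. What your perturbation argument actually yields is only the openness of $F$, so a priori $F\cap C_G$ could be a proper open subcone of the open cone $C_G$ over a fibered face $G$; nothing in your sketch shows that $F\cap C_G$ is also closed in $C_G$, and the set of fibered classes is certainly not closed in $H^1(M,\mathbb{R})$ globally. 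Thurston closes this gap with a separate geometric construction: given the fibration form $\omega$ and a class $\beta$ in the closed cone over the same face, he represents $\beta$ by a norm-minimizing surface $S$, uses the additivity $\|[\omega]+\beta\|_T=\|[\omega]\|_T+\|\beta\|_T$ together with a Poincar\'e--Hopf count to isotope $S$ so that the singular foliation induced by the fibration has only saddle tangencies of a consistent sign, and then verifies that $N\omega+\eta_S$ is nonsingular for $N$ large, where $\eta_S$ is a closed $1$-form dual to $S$ supported in a neighbourhood of $S$; combined with the cone property, density of rational classes and openness, this fills out the whole open cone. Some argument of this kind (or Fried's alternative via cross-sections to the suspension flow) is unavoidable, and it is absent from your proposal.
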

We call a top-dimensional face of the unit ball $B_T$ \emph{fibered,} if some integral class, and hence all, in the cone over its interior can be represented by a fibration. One also understands how the Thurston norm behaves under finite covers by the following result of Gabai.
\begin{thm}[\cite{Gab}, Cor. 6.13]\label{immersedrep}
Let $\tilde{M} \to M$ be a finite connected $d$-sheeted covering then for $\sigma \in H^2(M, \mathbb{R})$ we have
\[||\sigma||_T =\frac{1}{d} ||p^*\sigma||_T.\]
\end{thm}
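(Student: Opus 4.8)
Since the norm is homogeneous and continuous it suffices to treat an integral class $\sigma$, for which a norm-minimizing \emph{embedded} representative exists; the general real case then follows by scaling and a limiting argument. The plan is to establish the two inequalities $\|p^*\sigma\|_T \le d\,\|\sigma\|_T$ and $\|p^*\sigma\|_T \ge d\,\|\sigma\|_T$ separately, the first being routine and the second containing all of the content. For the routine direction I would fix an embedded surface $S \subset M$ representing $\sigma$ (via Poincar\'e duality) with $\chi_-(S) = \|\sigma\|_T$; such an $S$ may be chosen without sphere or disk components, so all of its components have nonpositive Euler characteristic. As $p$ is a local diffeomorphism, the preimage $\tilde S := p^{-1}(S)$ is an embedded surface representing $p^*\sigma$, and the restriction $p|_{\tilde S}\colon \tilde S \to S$ is a $d$-sheeted covering. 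Hence $\chi(\tilde S) = d\,\chi(S)$, no component of $\tilde S$ is a sphere, and therefore $\chi_-(\tilde S) = d\,\chi_-(S)$, giving $\|p^*\sigma\|_T \le d\,\|\sigma\|_T$.

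The reverse inequality is the difficult one, because a norm-minimizing surface upstairs representing $p^*\sigma$ need bear no deck-equivariant relation to any surface on $M$, and a naive averaging over the deck group does not return an embedded surface. To bridge the gap I would invoke Gabai's theory of taut foliations obtained from sutured manifold hierarchies. The essential input is Gabai's existence theorem, that the norm-minimizing surface $S$ above can be realized as a union of compact leaves of a taut foliation $\mathcal F$ of $M$. Complementing this is the converse estimate, due to Thurston (\cite{Th}): every compact leaf of a taut foliation is norm-minimizing in its homology class. Indeed, for such a leaf $L$ one has $\langle e(T\mathcal F),[L]\rangle = \chi(L) = -\chi_-(L)$, while the Roussarie--Thurston inequality $|\langle e(T\mathcal F),[\Sigma]\rangle| \le \|\Sigma\|_T$ holds for every surface $\Sigma$; taking $\Sigma = L$ forces equality and hence norm-minimality of $L$.

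With these results the equality follows at once. Pulling the foliation $\mathcal F$ back along $p$ produces a foliation $p^*\mathcal F$ of $\tilde M$ that is again taut, since a closed transversal to $\mathcal F$ (equivalently, a closed $2$-form positive on its leaves) pulls back to one on $\tilde M$. The preimage $\tilde S = p^{-1}(S)$ is then a union of compact leaves of $p^*\mathcal F$, so by the previous paragraph it is norm-minimizing in the class $p^*\sigma$; combined with the Euler-characteristic count already made this yields $\|p^*\sigma\|_T = \chi_-(\tilde S) = d\,\chi_-(S) = d\,\|\sigma\|_T$. The sole serious obstacle is the foliation existence theorem, whose proof is exactly the sutured manifold machinery of \cite{Gab}; the remainder is naturality of the pullback together with the multiplicativity of $\chi_-$ under covers. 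I would emphasize that the easy pullback bound alone is insufficient, as it controls the norm in only one direction, and that one may equally run the argument through sutured manifold decompositions in place of foliations.
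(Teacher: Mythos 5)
The paper offers no proof of this statement: it is quoted directly from Gabai (Cor.\ 6.13 of \cite{Gab}), so there is no internal argument to compare yours against. What you have written is, in outline, Gabai's own derivation --- the inequality $\|p^*\sigma\|_T \le d\,\|\sigma\|_T$ by pulling back a minimizing embedded representative, and the reverse inequality by realizing the minimizer as a union of compact leaves of a taut finite-depth foliation, pulling the foliation back, and applying the Roussarie--Thurston inequality upstairs --- and it is correct as a sketch. A few points should be made explicit if you intend it as a proof rather than a summary. First, in the easy direction no special choice of $S$ is actually needed: a finite cover of a sphere or torus is again a sphere or torus, so $\chi_-(p^{-1}(S)) = d\,\chi_-(S)$ holds componentwise for any embedded representative. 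Second, Gabai's existence theorem for taut foliations requires $M$ irreducible and the class to have positive norm, so you should dispose separately of the case $\|\sigma\|_T = 0$ (where the easy direction alone gives $\|p^*\sigma\|_T = 0$) and restrict to irreducible $M$, which is the only case this paper uses. Third, the foliations produced by the sutured-manifold hierarchy are only of finite depth and class $C^0$, so Thurston's inequality for $C^2$ foliations does not apply verbatim; one needs the version Gabai establishes for his foliations. Finally, the passage from integral to real classes should note that both sides are seminorms agreeing on the integral lattice, hence on rational classes by homogeneity and everywhere by continuity. None of these caveats changes the structure of your argument, which is the standard one.
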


These facts then allow us to completely characterise the Thurston norm of an irreducible Seifert fibered manifold.
\begin{prop}\label{SFnorm}
Let $M$ be irreducible and Seifert fibered, then either the Thurston norm of $M$ vanishes identically or $M$ fibers over $S^1$ and 
\[||\sigma||_T = \chi(F)|\sigma(\gamma)|\]
where $\gamma \in H_1(M)$ is a primitive class some multiple of which is homologous to the fiber of a Seifert fibration and $F$ is a connected fiber of a fibration of $M$.      
\end{prop}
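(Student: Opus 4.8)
The plan is to compute the norm by understanding the minimal-complexity surfaces in a Seifert fibered space, for which I would use the classical fact (Waldhausen) that every two-sided incompressible surface in $M$ is isotopic either to a \emph{vertical} surface (a union of regular fibers, hence a torus, annulus or Klein bottle) or to a \emph{horizontal} surface (one transverse to every fiber, hence a branched cover of the base orbifold $B$). Since $M$ is irreducible, a Thurston-norm minimizing representative of any class may be taken incompressible after discarding sphere components and compressing, so it suffices to examine these two types. Write $h$ for the generic fiber, so that $[h] = q\gamma$ for the primitive class $\gamma$, and let $e$ denote the rational Euler number; I will use the standard facts that $[h]$ is non-torsion precisely when $e=0$, that horizontal surfaces exist precisely when $e = 0$, and that a horizontal surface of degree $n$ over $B$ has $\chi = n\,\chi^{orb}(B)$ and meets $h$ in exactly $n$ points.

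First I would dispose of the vanishing case. A vertical surface has $\chi_- = 0$ and represents a class $\sigma$ with $\sigma(\gamma) = 0$, while a horizontal surface of degree $n$ represents a class with $\sigma(\gamma) = \pm n/q \neq 0$. Hence if $\chi^{orb}(B) \geq 0$ every incompressible surface (vertical, or horizontal with $\chi_- = -n\,\chi^{orb}(B) \leq 0$, hence $=0$) has vanishing complexity, and if $e \neq 0$ then $\gamma$ is torsion, so $\sigma(\gamma) = 0$ for all $\sigma$ and only vertical surfaces occur; in either situation $||\ ||_T \equiv 0$.

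There remains the case $\chi^{orb}(B) < 0$ and $e = 0$. Here a horizontal surface furnishes a fibration $M \to S^1$ (realizing $M$ as the mapping torus of a periodic diffeomorphism of that surface), so $M$ fibers over $S^1$. To get the formula I would first establish that $||\sigma||_T = C\,|\sigma(\gamma)|$ for a constant $C > 0$. The cleanest route is to pass to a finite cover $p \colon \tilde M \to M$ on which the Seifert fibration becomes an honest circle bundle of Euler number $0$, i.e.\ a product $\Sigma_g \times S^1$; the Thurston norm of a product is the standard $||\tilde\sigma||_T = |\chi(\Sigma_g)|\,|\tilde\sigma(\tilde\gamma)|$, and transporting this down via Gabai's scaling $||\sigma||_T = \tfrac1d||p^*\sigma||_T$ (Theorem \ref{immersedrep}), together with $p^*\sigma(\tilde\gamma) = \sigma(p_*\tilde\gamma)$ and $p_*\tilde\gamma \in \mathbb{Z}\gamma$, yields exactly such a proportionality. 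Consequently the unit ball $B_T$ is the slab $\{\sigma : |\sigma(\gamma)| \leq C^{-1}\}$, whose two top-dimensional faces are $\{\sigma(\gamma) = \pm C^{-1}\}$.

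Finally I would pin down the constant using Theorem \ref{fiberedfaces}. Since $M$ fibers, at least one of the two top faces is fibered, hence (by the symmetry $\sigma \mapsto -\sigma$) both are, so every class with $\sigma(\gamma) \neq 0$ lies in a fibered cone. As $\gamma$ is primitive there is a primitive $\psi \in H^1(M;\mathbb{Z})$ with $\psi(\gamma) = 1$; being primitive it is dual to a fibration with connected fiber $F$, and Thurston's theorem gives $||\psi||_T = \chi_-(F) = |\chi(F)|$. Comparing with $||\psi||_T = C\,|\psi(\gamma)| = C$ identifies $C = |\chi(F)|$ and hence $||\sigma||_T = |\chi(F)|\,|\sigma(\gamma)|$, as claimed (with $|\chi(F)| = -\chi(F)$, since the fiber of a fibration of a manifold with $\chi^{orb}(B) < 0$ has negative Euler characteristic). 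The main obstacle I anticipate is the bookkeeping in the middle step: ensuring that horizontal surfaces are genuinely norm-minimizing and correctly tracking the multiplicity $q$ relating $h$ to the primitive class $\gamma$. The covering argument is attractive precisely because it delivers the proportionality without having to verify minimality of horizontal surfaces by hand, after which the fibered-face argument supplies the constant cleanly.
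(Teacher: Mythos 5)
Your argument is correct and follows the paper's own proof in all essentials: both rest on the vertical/horizontal dichotomy for incompressible surfaces in an irreducible Seifert fibered space, pass to a finite cover of the form $(\text{surface}) \times S^1$ on which the norm is known, and descend via Gabai's covering formula (Theorem \ref{immersedrep}). The only difference is cosmetic --- the paper takes the cover arising from the periodic monodromy of the mapping torus and reads off the constant directly, whereas you trivialize the Seifert fibration and then pin down the constant with the fibered-face theorem, a welcome extra step since the paper leaves the identification of $p_*\tilde{\gamma}$ with a multiple of $\gamma$ implicit.
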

\begin{proof}
Since $M$ is irreducible and Seifert fibered either $M$ has a horizontal surface or every surface is isotopic to a vertical surface  (cf. \cite{Hat} Prop 1.11) and is hence a union of tori so the Thurston norm is identically zero. If $M$ has a horizontal surface $F$, which we may assume to be connected, then $M$ is a mapping torus with monodromy $\phi \in Diff^+(F)$ so that $\phi^n =Id$ for some $n$. This means that $M$ is covered by $\tilde{M} = F \times S^1$. If $\tilde{\gamma} = pt \times S^1$, then the Thurston norm of $\tilde{M}$ is given by
\[||\sigma ||_T = \chi(F)|\sigma(\tilde{\gamma})|\]
and the formula for the norm on $M$ follows from Theorem \ref{immersedrep}.
\end{proof}
\begin{ex}[Seifert fibered spaces with horizontal surfaces]\label{SFfiberedfaces}
We note that in the second case of Proposition \ref{SFnorm} the Thurston ball $B_T$ consists of two (noncompact) faces that are both fibered. Thus by \cite{FGM} any bundle over such an $M$ will admit an $S^1$-invariant symplectic form except possibly in the case where the Euler class $e(E)$ is a multiple of $PD(\gamma) \in H^1(M)$. If a bundle over $M$ with Euler class a multiple of $PD(\gamma)$ is symplectic then by taking the pullback bundle of the cover $\tilde{M} = F \times S^1 \to M$ we may assume that we have a bundle $E$ over $F \times S^1$ that is symplectic and has Euler class that is multiple of $PD(\tilde{\gamma})$. This in turn has a covering $\bar{E}$ with Euler class equal to $PD(\tilde{\gamma})$. Now if we let $T =\tilde{\gamma} \times S^1$ and  $X= \tilde{M} \times S^1$ then the SW polynomial of $X$ can be computed to be
\[ \mathcal{SW}_X^4 = (t_T - t_T^{-1})^{2g-2} \]
where $g$ is the genus of $F$. Then by the formula of Baldridge in \cite{Bal} it follows that all the SW invariants of $\bar{E}$ are zero, contradicting Taubes' non vanishing result. So in fact Conjecture \ref{strong} holds for Seifert fibered spaces that have horizontal surfaces.
\end{ex}
\section{The case of vanishing Thurston norm}\label{VanishingThNorm}
In \cite{FVi} it was shown that if $E = M \times S^1$ admits a symplectic form and $||\ ||_T \equiv 0$ or $M$ is Seifert fibered, then $M$ must fiber over $S^1$. In this section we shall extend this to the case of a nontrivial $S^1$-bundle and then show that Conjecture \ref{strong} holds in both of these cases.
From now on we shall assume that $M$ is also irreducible, which in view of Theorem \ref{asphericity} only excludes the case where $M = S^2 \times S^1$ and the bundle is trivial. Our argument will be based on that of \cite{FVi} and we begin with the following lemma.
\begin{lem}\label{tori}
If $S^1 \to E \stackrel{\pi} \rightarrow M$ is a bundle over an $M$ that has vanishing Thurston norm, then
\[ H^2(E, \mathbb{Z}) /Tor = V \oplus W \]
where $V,W$ are isotropic subspaces that admit a basis of embedded tori.
\end{lem}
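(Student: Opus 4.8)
The plan is to read the two isotropic subspaces directly off the Gysin sequence of the bundle, taking $V$ to be the image of $\pi^*$, whose classes are represented by \emph{vertical} tori (preimages $\pi^{-1}(\gamma)$ of loops in $M$), and taking $W$ to be a complement spanned by \emph{horizontal} tori (sections over tori in $M$); the existence of the latter is exactly where the vanishing of the Thurston norm enters.

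First I would set up the homological bookkeeping. The Gysin sequence
\[ H^0(M) \xrightarrow{\cup e} H^2(M) \xrightarrow{\pi^*} H^2(E) \xrightarrow{\pi_*} H^1(M) \xrightarrow{\cup e} H^3(M) \]
identifies $V := \operatorname{im}\pi^*$ with $\ker\pi_*$ and shows that $\pi_*$ maps $H^2(E)$ onto $\ker(\cup e : H^1(M) \to H^3(M))$. Comparing with the Betti number computation of Lemma \ref{gysinlemma}, one checks in both the torsion and non-torsion cases for $e$ that $\dim_{\mathbb{Q}} V = \tfrac12 b_2(E) = \dim_{\mathbb{Q}}\operatorname{im}\pi_*$, so $V$ is a half-dimensional subspace and any complement $W$ mapping isomorphically onto $\operatorname{im}\pi_*$ has the same dimension. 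That $V$ is isotropic is immediate, since $\pi^* a \cup \pi^* b = \pi^*(a\cup b) = 0$ because $H^4(M) = 0$. A basis of $H^2(M)/\mathrm{Tor}$ is Poincar\'e dual to embedded loops $\gamma_i \subset M$, and $\pi^*\mathrm{PD}[\gamma_i] = \mathrm{PD}_E[\pi^{-1}(\gamma_i)]$ with each $\pi^{-1}(\gamma_i)$ an embedded torus; selecting those whose classes survive in $V$ gives a basis of $V$ consisting of embedded tori.

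Next I would build $W$. Since $M$ is irreducible with vanishing Thurston norm, every class in $H_2(M)$ is carried by a disjoint union of spheres and tori, and the sphere components bound balls and so are nullhomologous; hence $H_2(M)$ is spanned by embedded tori. A class $\alpha \in \ker(\cup e)$ corresponds under Poincar\'e duality to some $[\Sigma]$ with $e([\Sigma]) = 0$, and over a torus on which $e$ restricts to zero the circle bundle is trivial, so a section exists and provides a horizontal torus $\hat\Sigma \subset E$ with $\pi_*[\hat\Sigma] = \alpha$. The self-intersection of such a section vanishes, because its normal bundle splits as the pullback of the (trivial, since $\Sigma$ is two-sided) normal bundle of $\Sigma$ in $M$ together with the trivial vertical line bundle coming from $e|_\Sigma = 0$. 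Choosing tori representing a spanning set of $\ker(\cup e)$ and lifting them by sections then yields classes projecting onto $\operatorname{im}\pi_*$, hence a complement $W_0$ to $V$.

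The main obstacle is that $W_0$ need not be isotropic: two horizontal tori $\hat\Sigma_1,\hat\Sigma_2$ over base tori that meet along a curve $C \subset M$ can intersect in the points of $C$ over which the two sections agree. I would remove this in one of two ways. When the spanning tori can be chosen pairwise disjoint in $M$ — which one expects for the Seifert/JSJ tori governing a vanishing-norm manifold — the horizontal lifts are automatically disjoint and $W_0$ is already isotropic. In general, representing a class with $\pi_*\beta_i = a_i$ by a de Rham form $\theta\wedge\pi^* a_i + \pi^* b_i$ shows that $\hat\Sigma_i\cdot\hat\Sigma_j = \langle b_j,[\Sigma_i]\rangle \pm \langle b_i,[\Sigma_j]\rangle$, so the intersection matrix can be cancelled by altering each $\hat\Sigma_i$ by a suitable vertical class $v_i \in V$; since $V$ is Lagrangian and pairs perfectly with $W_0$, the required $v_i$ exist, and geometrically one realizes $\hat\Sigma_i + v_i$ by an embedded torus, tubing the section to the relevant vertical tori inside a trivialized neighbourhood $E|_\Sigma \cong T^2\times S^1$, where the sum of a horizontal and a vertical coordinate torus is again a linear, hence embedded, torus. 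The delicate points are to carry out these corrections integrally and to arrange that the Euler class vanishes on each individual torus component so that the sections exist in the first place; settling these completes the isotropic splitting $H^2(E)/\mathrm{Tor} = V \oplus W$ with both summands spanned by embedded tori.
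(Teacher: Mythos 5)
Your decomposition is the one the paper uses: $V=\pi^{*}H^{2}(M)$, spanned by the vertical tori $\pi^{-1}(\gamma_i)$, and $W$ the image of a section $s$ of $\pi_{*}$ obtained by lifting zero-complexity representatives of a basis of $\ker(\cup e)\subset H^{1}(M)$ to $E$; so in substance you have reproduced the paper's argument. Where you genuinely diverge is your last paragraph. The paper simply takes arbitrary lifts $\tilde{\Sigma}_i$ and asserts the conclusion without discussing pairwise intersections, whereas you correctly observe that two section-lifts over intersecting base tori can have nonzero algebraic intersection: homologically, changing the section over $\Sigma$ changes $[\tilde{\Sigma}]$ by classes $[\pi^{-1}(\gamma)]$ with $\gamma\subset\Sigma$, and these pair nontrivially with the other horizontal classes, so the isotropy of $W$ really does depend on the choice of lifts. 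Your proposed repair by adding vertical classes is the right idea but is only realizable by an embedded \emph{torus} when the correcting loop can be taken to lie on $\Sigma_i$ itself (i.e.\ one re-chooses the section); tubing $\tilde{\Sigma}_i$ to a disjoint vertical torus raises the genus, so the general correction you invoke is not available. In the end this looseness is harmless for the paper's purposes: the only consequence of Lemma \ref{tori} used in Theorem \ref{vanishingThnorm} is that $H^{2}(E)/\mathrm{Tor}$ has a basis of embedded tori of self-intersection zero, which both your argument and the paper's do establish. You are also right to flag that one must arrange for $e$ to vanish on each component of a (possibly disconnected) norm-minimising representative before a section exists --- another point the paper passes over in silence.
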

\begin{proof}
We consider the Gysin sequence
\[ \xymatrix{ \mathbb{Z}  \ar[r] & H^2(M) \ar[r]^{ \pi^{*}} & H^2(E) \ar[r]^{\pi_{*}}& H^1(M) \ar@/  ^/ [l]^s \ar[r] & \mathbb{Z}.} \]
Here $s$ is a section defined on the image of $\pi_*$ as follows: we represent an element of $\sigma \in H^1(M)$ by an embedded surface $\Sigma$. By exactness, $\sigma$ will be in $Im(\pi_*)$ precisely when the bundle is trivial on $\Sigma$ and in this case we may lift $\Sigma$ to some  $\tilde{\Sigma}$ in $E$. As $H^1(M)$ is free, we define $s$ on a $\mathbb{Z}$-basis $\{\sigma_i\}$ by  $s(\sigma_i) = \tilde{\Sigma}_i$. We set $V = \pi^* H^2(M)$ and $W = s(H^1(M))$, then $V$ is clearly spanned by embedded tori and the statement for $W$ is precisely the assumption on the Thurston norm.
\end{proof}
\begin{prop}\label{killtorsion}
Let $S^1 \to E \stackrel{\pi} \rightarrow M$ be an $S^1$-bundle with torsion Euler class $e(E)$, then there is a finite cover $\tilde{M} \stackrel{p} \rightarrow M$ such that the pullback bundle $p^*E \to \tilde{M}$ is trivial.
\end{prop}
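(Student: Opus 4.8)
The plan is to reduce the statement to a purely cohomological one and then kill the torsion Euler class by a suitable finite cyclic cover. Recall that oriented circle bundles over a finite CW complex such as $M$ are classified up to isomorphism by their Euler class in $H^2(M;\mathbb{Z})$, and that the Euler class is natural under pullback, i.e. $e(p^*E)=p^*e(E)$ for a covering $p\colon\tilde M\to M$. Consequently $p^*E$ is trivial precisely when $p^*e(E)=0$, so it suffices to produce a finite cover $p\colon \tilde M\to M$ with $p^*e(E)=0$ in $H^2(\tilde M;\mathbb{Z})$.

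To find such a cover, I would exhibit $e(E)$ as a Bockstein. Applying cohomology to the coefficient sequence $0\to\mathbb{Z}\to\mathbb{R}\to\mathbb{R}/\mathbb{Z}\to0$ gives a connecting homomorphism
\[ \beta\colon H^1(M;\mathbb{R}/\mathbb{Z})\longrightarrow H^2(M;\mathbb{Z}), \]
whose image is the kernel of $H^2(M;\mathbb{Z})\to H^2(M;\mathbb{R})$, namely the full torsion subgroup of $H^2(M;\mathbb{Z})$. Since $e(E)$ is torsion by hypothesis we may write $e(E)=\beta(\phi)$ for some class $\phi\in H^1(M;\mathbb{R}/\mathbb{Z})=\mathrm{Hom}(H_1(M),\mathbb{R}/\mathbb{Z})=\mathrm{Hom}(\pi_1(M),\mathbb{R}/\mathbb{Z})$. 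The decisive point is that $\phi$ can be chosen of finite order: writing $H_1(M)=\mathbb{Z}^{b_1}\oplus T$ with $T$ finite, the divisible summand $\mathrm{Hom}(\mathbb{Z}^{b_1},\mathbb{R}/\mathbb{Z})$ is exactly the kernel of $\beta$ (it is the image of $H^1(M;\mathbb{R})$), so $\beta$ restricts to an isomorphism from the finite group $\mathrm{Hom}(T,\mathbb{R}/\mathbb{Z})$ onto the torsion of $H^2(M;\mathbb{Z})$. I take $\phi$ to be the preimage of $e(E)$ lying in this finite summand; such a $\phi$ has finite image, a finite cyclic subgroup of $\mathbb{R}/\mathbb{Z}$.

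Regarding $\phi$ as a homomorphism $\pi_1(M)\to\mathbb{R}/\mathbb{Z}$ with finite image, its kernel is a normal subgroup of finite index $n$. Let $p\colon\tilde M\to M$ be the corresponding finite regular (cyclic) cover, so that $\pi_1(\tilde M)=\ker\phi$. Then the restriction $p^*\phi=\phi|_{\pi_1(\tilde M)}$ vanishes in $H^1(\tilde M;\mathbb{R}/\mathbb{Z})$, and by naturality of the Bockstein under pullback
\[ p^*e(E)=p^*\beta(\phi)=\beta(p^*\phi)=\beta(0)=0. \]
Hence $p^*E$ has vanishing Euler class and is therefore the trivial bundle, as desired.

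The only genuine subtlety, and the step I would be most careful about, is guaranteeing that the cover is \emph{finite}: an arbitrary preimage of $e(E)$ under $\beta$ could have image dense in $\mathbb{R}/\mathbb{Z}$, in which case its kernel would have infinite index. This is exactly what is fixed by choosing $\phi$ in the finite summand $\mathrm{Hom}(T,\mathbb{R}/\mathbb{Z})$, equivalently by choosing a torsion preimage (any two preimages differ by an element of the divisible kernel of $\beta$, so a torsion one always exists). Everything else is formal: the classification of oriented circle bundles by the Euler class, the identification of $\mathrm{Im}\,\beta$ with the torsion of $H^2$, and the naturality of $\beta$.
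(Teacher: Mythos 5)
Your proof is correct. The overall strategy coincides with the paper's --- reduce to showing $p^*e(E)=0$ and then kill the torsion class by a finite abelian cover determined by a character of $H_1(M)$ supported on the torsion subgroup $T$ --- but the implementation differs in two respects. The paper takes the single cover associated to the projection $\pi_1(M)\to H_1(M)\to T$ and checks that $p^*$ annihilates \emph{all} of the torsion in $H^2(M)$ at once, using naturality of the Universal Coefficient sequence and the identification of that torsion with $\mathrm{Ext}(T,\mathbb{Z})$. You instead realise the particular class $e(E)$ as the Bockstein $\beta(\phi)$ of a finite-order character $\phi\in\mathrm{Hom}(T,\mathbb{R}/\mathbb{Z})$ and pass to the (generally smaller) cyclic cover determined by $\ker\phi$, concluding by naturality of $\beta$ for the coefficient sequence $0\to\mathbb{Z}\to\mathbb{R}\to\mathbb{R}/\mathbb{Z}\to 0$. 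The one step that genuinely needs care in your route is the one you flag: an arbitrary $\beta$-preimage of $e(E)$ could have dense image and yield an infinite cover, and your identification of $\ker\beta$ with the divisible summand $\mathrm{Hom}(\mathbb{Z}^{b_1},\mathbb{R}/\mathbb{Z})$ (the image of $H^1(M;\mathbb{R})$) correctly settles this by letting you pick the preimage in the finite summand $\mathrm{Hom}(T,\mathbb{R}/\mathbb{Z})$. What each approach buys: yours produces a minimal cyclic cover adapted to the given bundle, while the paper's single cover trivialises every torsion circle bundle over $M$ simultaneously; either version suffices for the application in Theorem \ref{vanishingThnorm}.
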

\begin{proof}
We choose a splitting of $H_1(M) = F \oplus T$ where $T$ is the torsion subgroup and $F$ is any free complement. We take the cover $\tilde{M} \stackrel{p} \rightarrow M$ associated to the kernel of the composition 
\[ \pi_1(M) \to H_1(M) \stackrel{\phi} \rightarrow T,\]
where $\phi$ is the projection with kernel $F$. Note that the composition $H_1(\tilde{M}) \stackrel{p_*} \rightarrow H_1(M) \stackrel{\phi_*} \rightarrow T$ is zero. Then by the Universal Coefficient Theorem we have the following commutative diagram
\[ \xymatrix{ 0 \ar[r]& Ext(H_1(\tilde{M}), \mathbb{Z}) \ar[r]& H^2(\tilde{M}) \ar[r]& Hom(H_2(\tilde{M}), \mathbb{Z}) \ar[r] & 0 \\
 0 \ar[r]& Ext(H_1(M), \mathbb{Z}) \ar[r] \ar[u]^{(p_*)^*}& H^2(M) \ar[r] \ar[u]^{p^*}& Hom(H_2(M), \mathbb{Z}) \ar[r] \ar[u]^{(p_*)^*}& 0. \\
 & \ar[u]^{(\phi_*)^*}_{\cong} Ext(T, \mathbb{Z})& &&}\]
This implies that $p^*$ is zero on torsion in $H^2(M)$ so the pullback bundle is indeed trivial.
 \end{proof}
 
 \begin{thm}\label{vanishingThnorm}
Let $S^1 \to E \stackrel{\pi} \rightarrow M$ be a symplectic circle bundle over an irreducible manifold for which $||\ ||_T$ is identically zero, then $M$ fibers over $S^1$. 
\end{thm}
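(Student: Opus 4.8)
The plan is to force the fibration out of Seiberg--Witten theory, using the symplectic structure on $E$ to produce a nonvanishing invariant and the vanishing of $||\cdot||_T$ to read that invariant as a fibration of $M$. Since $E$ is symplectic, Taubes' nonvanishing theorem (cf. \cite{Tau}) supplies the canonical $Spin^c$-structure $\xi_{can}$ with $SW_E(\xi_{can}) = \pm 1$, at least once $b_2^+(E) \geq 2$. By Lemma \ref{gysinlemma} this is automatic when $b_1(M) \geq 3$, and the remaining low-$b_1$ cases I would reach by passing to a finite cover: by Theorem \ref{immersedrep} any cover still has vanishing Thurston norm, and being a cover of an irreducible aspherical manifold (Theorem \ref{asphericity}) it remains irreducible and aspherical, while the pulled-back bundle stays symplectic, so the hypotheses persist.

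The first substantive step is to constrain the canonical class of $E$. By Lemma \ref{tori} the lattice $H^2(E)/Tor = V \oplus W$ is spanned by embedded tori, each of self-intersection zero. Applying the adjunction inequality to every Seiberg--Witten basic class, in particular to the canonical class $K$, against these genus-one, trivially self-intersecting tori forces $K \cdot [T] = 0$ for every one of them; since they span $H^2(E;\mathbb{Q})$ and the intersection form is nondegenerate, $K$ must be torsion. This is the four-dimensional reflection of the fact that any fiber of a putative fibration of $M$ must be a torus, as dictated by $||\cdot||_T \equiv 0$.

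Next I would transfer this information to the base. If $e(E)$ is torsion, Proposition \ref{killtorsion} trivialises the bundle over a finite cover, reducing matters to a product $\tilde{M} \times S^1$ whose invariants are governed by the Meng--Taubes/Turaev torsion of $\tilde{M}$; if $e(E)$ is not torsion I would instead invoke Baldridge's formula (\cite{Bal}, as already used in Example \ref{SFfiberedfaces}) to express $SW_E$ through the three-dimensional invariants of $M$ together with $e(E)$. In either case the equality $SW_E(\xi_{can}) = \pm 1$ forces the three-dimensional Seiberg--Witten invariant of $M$ (equivalently its Turaev torsion) to be nonzero with a unit leading coefficient. Because $||\cdot||_T \equiv 0$, the Newton polytope of this torsion is a single lattice point, so the unit coefficient is exactly the monicity hypothesis that, by the argument of \cite{FVi} using subgroup separability of $\pi_1(M)$, detects a genuine fibration of $M$ over $S^1$, necessarily by tori; Theorem \ref{fiberedfaces} then records this as the fibration sought.

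The hard part will be this final transfer. I expect the main obstacle to be matching the canonical $Spin^c$-structure on $E$ with the correct corner coefficient of the base torsion under Baldridge's formula in the non-torsion Euler class case, and simultaneously organising the finite covers so that the separability argument certifies a fibration of $M$ itself rather than merely of a cover, all while keeping $b_2^+(E) \geq 2$ and tracking the behaviour of the invariants under pullback. Everything else — the application of Taubes, the adjunction argument of the second paragraph, and the reductions via Propositions \ref{killtorsion} and \ref{gysinlemma} — is comparatively routine.
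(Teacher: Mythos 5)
Your opening two steps coincide with the paper's: Taubes' nonvanishing together with the adjunction inequality applied to the tori of Lemma \ref{tori} forces $K\cdot\alpha=0$ for all $\alpha$, hence $K$ torsion. (The paper handles $b_2^{+}(E)=1$ directly via \cite{LL} Theorem E rather than by passing to a cover, which matters because your plan to ``pass to a finite cover'' to raise $b_1$ is itself a gap: an arbitrary irreducible $3$-manifold is not known, at this level of generality, to admit finite covers with larger first Betti number, so you cannot assume $b_2^{+}(E)\geq 2$ this way.) The more serious gap is in your final transfer to the base. Monicity of the untwisted Turaev torsion of $M$, even with a one-point Newton polytope, is only a \emph{necessary} condition for fibering; the criterion of \cite{FVi} that you invoke to upgrade it to a fibration requires monicity of twisted or covering invariants together with a subgroup separability hypothesis on $\pi_1(M)$ that you neither verify nor can expect to hold unconditionally (that conditional criterion is a separate theorem of \cite{FVi} from their vanishing-norm result, and discharging the separability hypothesis was precisely the open part of the Taubes conjecture). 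So the step ``unit leading coefficient $\Rightarrow$ $M$ fibers'' does not go through as stated.

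The paper closes both gaps with one device. Since $||\cdot||_T\equiv 0$, $M$ is irreducible and $b_1(M)\geq 1$, the manifold $M$ contains an incompressible torus $T$, and Proposition 7 of \cite{Koj} gives a dichotomy: either $T$ is the fiber of a fibration (and we are done), or $M$ admits a finite cover $\bar M$ with $b_1(\bar M)\geq 4$. Assuming the latter, one pulls back the bundle, uses the constraints of \cite{Tau2} to upgrade ``$K$ torsion'' to ``$\bar K=0$ is the \emph{only} basic class,'' and then Baldridge's formula \cite{Bal} (or Proposition \ref{killtorsion} plus the product formula when $e(E)$ is torsion) identifies $\pm 1=\sum_{\xi}SW^4_{\bar E}(\xi)$ with $\sum_{\xi}SW^3_{\bar M}(\xi)$, which vanishes once $b_1(\bar M)\geq 4$ by Lescop/Turaev (\cite{Tur} p.~114). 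The contradiction excludes the second alternative, so $T$ is a fiber. In short, the Seiberg--Witten computation is used to rule out the non-fibered branch of Kojima's dichotomy, not to certify fibering directly from monicity; replacing your last paragraph with this dichotomy-plus-vanishing argument is what makes the proof close.
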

\begin{proof}
Since $E$ is symplectic it has an associated canonical bundle $\xi_{can}$ and canonical class that we denote by $K$. We claim that our assumption on the Thurston norm of the base implies that $K$ must be torsion. For by Taubes' nonvanishing result $\xi_{can}$ has nontrivial SW invariant. If $\alpha \in H^2(E)$, the adjunction inequality (see \cite{Kro}) and Lemma \ref{tori} imply that
\[|\alpha . K| = 0.\]
This also holds in the case $b_2^+(E) = 1$ (cf. \cite{LL} Theorem E). As $M$ is irreducible and $b_2(M) \geq 1$ the assumption on the vanishing of the Thurston norm implies that $M$ contains an embedded, incompressible torus $T \hookrightarrow M$. Then by  Proposition 7 of \cite{Koj} either $T$ is the fiber of some fibration or there is a finite cover $\bar{M} \stackrel{p} \rightarrow M$ with large $b_1$, say $b_1(\bar{M}) \geq 4$. We assume that the latter holds. Then the pullback $\bar{E} = p^*E$ will be symplectic with canonical class $\bar{K} = p^*K$, symplectic form $\bar{\omega} = p^* \omega$ and $b_2^+(\bar{E}) \geq 2$. Then for any $Spin^c$-structure $\xi_{can} \otimes F$ that has nontrivial SW invariant we have by \cite{Tau2}
\[ 0 \leq F. [\bar{\omega}] \leq \bar{K}.[\bar{\omega}]. \]
Moreover, since $\bar{K}$ is torsion and equality on the left implies $F = 0$, we conclude that in fact $\bar{K} =0$. Thus $\bar{K}=0$, so $\bar{\xi}_{can}$ is trivial and this is the only $Spin^c$-structure with nonzero SW invariant. We now need to consider two cases. We first assume that $e(E)$ and hence $e(\bar{E})$ is nontorsion. In this case we compute
\[ \pm 1 = \sum_ {\xi^* \in Spin^c(\bar{E})} SW^4_{\bar{E}}(\xi^*) = \sum_ {\xi^* \in Spin^c(\bar{E})} \sum_{\xi^* \equiv \  \xi \ mod \ \bar{e}} SW^3_{\bar{M}}(\xi) = \sum_ {\xi \in Spin^c(\bar{E})} SW^3_{\bar{M}}(\xi), \]
where the second inequality follows from Theorem 1 in \cite{Bal}. However as $b_1(\bar{M}) \geq 4$ this sum is zero (cf. \cite{Tur} p.114) a contradiction. If the Euler class is torsion we may assume by Proposition \ref{killtorsion} that it is indeed zero and the above calculation reduces to
\[ \pm 1 = \sum_ {\xi \in Spin^c(\bar{E})} SW^4_{\bar{E}}(\xi) = \sum_ {\xi \in Spin^c(\bar{E})} SW^3_{\bar{M}}(\xi) = 0. \]
In either case we obtain a contradiction and hence $M$ must fiber over $S^1$.
\end{proof}

As a consequence of this theorem we conclude that Conjecture \ref{strong} holds if $M$ has vanishing Thurston norm or is Seifert fibered.
\begin{cor}\label{Seifertcase}
Conjecture \ref{strong} holds if $M$ is Seifert fibered or $||\ ||_T \equiv 0$.
\end{cor}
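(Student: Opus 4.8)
The plan is to treat the two hypotheses in turn, reducing the Seifert case to the case of vanishing Thurston norm together with the horizontal-surface computation of Example \ref{SFfiberedfaces}. By Theorem \ref{asphericity} I may assume throughout that either $M$ is irreducible or $M = S^2\times S^1$ with $E$ trivial; in the latter case $M$ fibers over $S^1$ with fiber $S^2$ and $e(E)=0$, so the conclusion of Conjecture \ref{strong} is immediate. Thus the real work is to produce, for irreducible $M$, a fibration $\Sigma\to M\to S^1$ whose fiber annihilates the Euler class.

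For the case $\|\ \|_T\equiv 0$, Theorem \ref{vanishingThnorm} already gives that $M$ fibers over $S^1$, so the only remaining point is the pairing $e(E)([\Sigma])=0$, and this is where I expect the main obstacle to lie, since the fibration furnished by Theorem \ref{vanishingThnorm} need not be the one we want. The key observation is that the symplectic class itself detects a suitable class on the base. Working with real coefficients in the Gysin sequence of Lemma \ref{gysinlemma}, set $\sigma_0 := \pi_*[\omega]\in H^1(M,\mathbb{R})$. First I would show $\sigma_0\neq 0$: if $[\omega]$ lay in $\mathrm{Im}\,\pi^*=\ker\pi_*$, say $[\omega]=\pi^*\beta$, then $[\omega]^2=\pi^*(\beta^2)=0$ because $\beta^2\in H^4(M,\mathbb{R})=0$, contradicting $\int_E\omega^2>0$. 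On the other hand exactness gives $\mathrm{Im}\,\pi_* = \ker\bigl(\cup e(E)\colon H^1(M)\to H^3(M)\bigr)$, so $e(E)\cup\sigma_0=0$. Hence $\sigma_0$ is a nonzero class lying in the rational subspace $P:=\ker(\cup e(E))$.

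It then remains to realize a class of $P$ by a fibration, and here I would use that the vanishing of the Thurston norm makes $B_T$ all of $H^1(M,\mathbb{R})$, whose only top-dimensional face is $B_T$ itself. By Theorem \ref{fiberedfaces} the set $F$ of fibered classes is therefore either empty or all of $H^1(M,\mathbb{R})\setminus\{0\}$, and since $M$ fibers by Theorem \ref{vanishingThnorm} we conclude $F = H^1(M,\mathbb{R})\setminus\{0\}$. As $P$ is defined over $\mathbb{Q}$ and contains $\sigma_0\neq 0$, it contains a primitive integral class $\mu$; choose a fibration $\Sigma\to M\to S^1$ with $PD[\Sigma]=\mu$, so that $[\Sigma]=\mu\cap[M]$. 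Then
\[
e(E)([\Sigma]) = \langle e(E),\, \mu\cap[M]\rangle = \langle e(E)\cup\mu,\, [M]\rangle = 0,
\]
the last equality holding because $\mu\in P$ and $H^3(M,\mathbb{Z})$ is torsion-free. This verifies Conjecture \ref{strong} whenever $\|\ \|_T\equiv 0$.

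For the Seifert fibered case I would invoke Proposition \ref{SFnorm}: an irreducible Seifert fibered $M$ either has $\|\ \|_T\equiv 0$, which is the case just handled, or it carries a horizontal surface. In the latter situation Example \ref{SFfiberedfaces} already establishes the conjecture by a direct Seiberg--Witten computation via Baldridge's formula, so no further argument is needed. The upshot is that the fibering is the easy input (Theorem \ref{vanishingThnorm}), while the genuine content is the Euler-class pairing; it is resolved by the twin facts that $\pi_*[\omega]$ is a nonzero class annihilating $e(E)$ and that, under vanishing Thurston norm, \emph{every} nonzero cohomology class is fibered.
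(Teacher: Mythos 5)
Your proposal is correct and follows essentially the same route as the paper: Theorem \ref{vanishingThnorm} to get a fibration, Theorem \ref{fiberedfaces} to conclude that all nonzero classes are fibered when the norm vanishes, and the dichotomy of Proposition \ref{SFnorm} together with Example \ref{SFfiberedfaces} for the Seifert case. The only difference is that you spell out why a fibered class annihilating $e(E)$ exists (via $\pi_*[\omega]\neq 0$ in $\ker(\cup\, e(E))$ and torsion-freeness of $H^3(M)$), a point the paper delegates to the construction of \cite{FGM}; this added detail is a welcome clarification rather than a divergence.
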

\begin{proof}
If $M$ has vanishing Thurston norm, then by Theorem \ref{fiberedfaces} we conclude that if one class in $H^1(M)$ can be represented by a fibration then so can all classes and by the construction of \cite{FGM} every bundle over $M$ admits an $S^1$-invariant symplectic form. If $M$ is Seifert fibered it either has vanishing Thurston norm by Proposition \ref{SFnorm} and we proceed as in the previous case or $M$ has a horizontal surface and the claim follows by Example \ref{SFfiberedfaces} above.
\end{proof}
\section{The case where $E$ admits a Lefschetz fibration}\label{The case where E admits a Lefschetz fibration}
In \cite{CM} Chen and Matveyev showed that if $S^1 \times M$ admits a symplectic Lefschetz fibration then $M$ fibers over $S^1$. This was extended by Etg\"u in \cite{Etg} to the case where the fibration may or may not be symplectic. In this section we shall show that the same statement holds for arbitrary $S^1$-bundles. Let us begin with some definitions and basic facts concerning Lefschetz fibrations. 

\begin{defn}
Let $E$ be a compact, connected, oriented smooth 4-manifold, a Lefschetz fibration is a map $E \stackrel{p} \rightarrow B$ to an orientable surface so that any critical point has a chart on which $p(z_1,z_2) = z_1^2 + z_2^2$.
\end{defn}

We list some basic properties of Lefschetz fibrations (for proofs see \cite{GS}).
\begin{enumerate}
\item There are finitely many critical points, so the generic preimage of a point will be a surface and we may assume that this is connected. To each critical point one associates a vanishing cycle in the fiber.
\item A Lefschetz fibration admits a symplectic form so that the fiber is a symplectic submanifold if the class $[F]$ of the fiber is nontorsion in $H_2(E)$. Moreover this is always true if $\chi(F) \neq 0$.
\item We have a formula for the Euler characteristic given by
\[ \chi(E) = \chi(B).\chi(F) + \# \{ critical \ points \}. \]
\end{enumerate}

We will first show that for a symplectic circle bundle any Lefschetz fibration will actually be a proper fibration, i.e. cannot have any critical points. The following lemma is essentially Lemma 3.4 of \cite{CM}.
\begin{lem}\label{novanishingcycles}
Let $S^1 \to E \stackrel{\pi} \rightarrow M$ be a circle bundle that admits a Lefschetz fibration $E \stackrel{p} \rightarrow B$, then $p$ has no critical points.
\end{lem}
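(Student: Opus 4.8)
The plan is to show that a Lefschetz fibration on the total space $E$ of a symplectic circle bundle over $M$ must be a genuine fiber bundle, by arguing that critical points are incompatible with the existence of the nowhere-vanishing vertical vector field coming from the circle action. The guiding principle, following Lemma 3.4 of Chen-Matveyev, is that an $S^1$-action gives $E$ a nonvanishing vector field $\xi$ generating the action, and hence the Euler characteristic $\chi(E)$ vanishes; any critical point of the Lefschetz fibration, however, contributes positively to $\chi(E)$ via property (3), so there is tension to exploit. The subtlety is that $\chi(E)=0$ alone only forces $\chi(B)\chi(F) = -\#\{\text{critical points}\}$, which is not an immediate contradiction when $\chi(B)\chi(F) < 0$, so a bare Euler-characteristic count is insufficient and one must use the geometry of the orbits more carefully.

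First I would record that the circle bundle structure endows $E$ with a free $S^1$-action whose orbits are the fibers of $\pi$, and that the infinitesimal generator $\xi$ is a nowhere-vanishing vector field on $E$. The key geometric observation is to compare this flow with the Lefschetz fibration $p$. Near a critical point the fibration looks like $p(z_1,z_2)=z_1^2+z_2^2$, and the local model has an isolated singular fiber; I would argue that the orbit of the $S^1$-action through a critical point, or through nearby points, cannot be consistently tangent or transverse to the fibers of $p$ in a way compatible with the local Lefschetz model. Concretely, the plan is to show that the vanishing cycle must be preserved by the circle action, and then that the free action forces the singular fiber to be smooth, contradicting the presence of a genuine node.

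The cleanest route, which I expect to be the intended one, is to combine property (1)—each critical point carries a vanishing cycle in a fiber $F$—with the fact that $F$ inherits constraints from the bundle. Since $E$ carries a free $S^1$-action, I would push the action down or restrict it to a neighborhood of a singular fiber and observe that an isolated node cannot be invariant under a free circle action near it: a nodal singularity has an isolated singular point in its fiber, but the free flow moves this point along a circle, and the fibers of $p$ would have to accommodate a whole circle of singular points, which contradicts the discreteness of critical points in property (1). Thus no critical points can exist.

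The main obstacle will be making the interaction between the $S^1$-orbits and the local Lefschetz model \emph{rigorous} rather than merely suggestive: one must control how the orbit of the action sits relative to the fibers of $p$ near a critical point and rule out the degenerate possibility that the orbit is entirely contained in a single (singular) fiber. If the orbit through a critical point lies in a fiber, the Euler-characteristic argument does not immediately bite, so I would handle this case separately—for instance by noting that an $S^1$-orbit contained in a singular nodal fiber would have to avoid the node (the action is free, but the node is a distinguished fixed-type point of the fiber's own topology) and then analyzing the quotient to derive a contradiction. Reconciling these cases, and pinning down exactly which vanishing-cycle configurations survive a free circle symmetry, is where the real work lies; once that local incompatibility is established, the conclusion that $p$ has no critical points is immediate.
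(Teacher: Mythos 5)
Your approach has a genuine gap at its core: you assume throughout that the $S^1$-action interacts with the Lefschetz fibration $p$ --- that the orbit of a critical point consists of critical points, that singular fibers must be ``invariant under the free circle action,'' and so on. But $p$ is an arbitrary Lefschetz fibration on $E$; no equivariance or compatibility with the circle action is assumed, so the flow of the generating vector field simply moves a critical point of $p$ to ordinary regular points, and no ``circle of singular points'' is forced. The only global consequence of the free action that survives is $\chi(E)=0$, which you correctly identify as insufficient when $\chi(B)\chi(F)<0$ (i.e.\ when the fiber has genus at least $2$ and $B$ has positive genus, or vice versa). Your proposed repair --- analyzing how orbits sit relative to singular fibers --- cannot be made rigorous precisely because there is no a priori relation between the two structures; this is not a technical obstacle but a fatal one.

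The paper's proof proceeds instead by cases on the fiber $F$ of the Lefschetz fibration. For $F=S^2$ it uses that $E$ is spin, so there are no embedded $(-1)$-spheres, ruling out separating vanishing cycles, while nonseparating simple closed curves do not exist on $S^2$. For $F=T^2$ the Euler characteristic count $0=\chi(E)=\chi(B)\cdot 0+\#\{\text{critical points}\}$ does suffice. For genus $\geq 2$ the argument is gauge-theoretic: the Lefschetz fibration is automatically symplectic (property (2)), the adjunction formula gives $K\cdot F=\chi_-(F)\neq 0$, Taubes' theorem makes $K$ a basic class (with the Li--Liu extension covering $b_2^+=1$), and Baldridge's result says basic classes of circle bundles are pulled back from $M$; a vanishing cycle would let one represent $\pi_*[F]$ by a surface of strictly smaller complexity, and the chain $\chi_-(F)=|K\cdot\pi_*F|\leq\|\pi_*F\|_T\leq\chi_-(F')<\chi_-(F)$ gives the contradiction. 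None of this machinery --- spin-ness, the adjunction inequality, or the Thurston norm --- appears in your proposal, and some substitute for it is genuinely needed in the higher-genus case.
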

\begin{proof} We first consider the case where $F = S^2$, then since $E$ is spin and hence has an even intersection form, all critical points are non-separating in $F$. Thus we cannot have any. If $F = T^2$, the equation 
\[ 0 = \chi(E) = \chi(B).\chi(F) + \# \{  critical \ points \}. \]
implies that $E$ has no  critical points.

We now consider the case when $F$ has genus $greater$ than 1. We know that $E$ admits a symplectic Lefschetz fibration. Thus by the adjunction formula for symplectic surfaces we see that
\begin{equation} \label{eqadj} K.F = \chi_{-}(F) \neq 0 \end{equation}
where $K$ is the canonical class on $E$. If $b_2^+ > 1$ then it follows from Taubes' result that $K$ is a basic class and thus the adjunction inequality holds. In the case where $b^+(E) = 1$ we may apply the adjunction inequality exactly as in the case of $b_2^+ > 1$ by (\cite{LL} Theorem E). 
Now we assume that our fibration has a critical point and hence a vanishing cycle $\gamma$, then we know that this is nonseparating so the fiber $F$ is homologous to a surface obtained by collapsing $\gamma$ to a point and this can in turn be thought of as the image of a map $F' \stackrel{f} \rightarrow E$ where $\chi_{-}(F') < \chi_{-}(F)$. Hence the image $\pi_*[F]$ may be represented by a surface of complexity at most $\chi_{-}(F')$ (see \cite{Gab}). We know that any basic class of a circle bundle is a pullback of a class on the base (see \cite{Bal}) thus by the adjunction inequality (which still holds for $b_2^+ = 1$) and equation (\ref{eqadj})
\[ \chi_{-}(F) = |K.F| = |K.\pi_*F| \leq ||\pi_*F ||_T \leq  \chi_{-}(F') < \chi_{-}(F) \]
which is a contradiction.
\end{proof}

Our proof of Theorem \ref{Lefschetzbundles} below, which differs from those of \cite{CM} and \cite{Etg}, will rely on a theorem of Stallings that characterises fibered 3-manifolds in terms of their fundamental group. 

\begin{thm}[Stallings]\label{Stallings}
Let $M$ be a compact, irreducible 3-manifold and suppose there is an extension\[ 1 \to G \to \pi_1(M) \to \mathbb{Z} \to 1\]
where $G$ is finitely generated and $G \neq \mathbb{Z}_2$, then $M$ fibers over $S^1$.
\end{thm}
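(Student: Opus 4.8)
The plan is to reconstruct Stallings' original argument, whose engine is the interplay between the mapping-torus (HNN) structure that a surjection $\pi_1(M)\to\mathbb{Z}$ imposes and the finite generation of its kernel. The surjection $\phi$ is Poincar\'e dual to a primitive class in $H^1(M;\mathbb{Z})\cong[M,S^1]$, so there is a smooth map $f\colon M\to S^1$ with $f_*=\phi$ on $\pi_1$; making $f$ transverse to a regular value yields a properly embedded, two-sided, non-separating surface $S=f^{-1}(\mathrm{pt})$, which we may take connected since $[S]\neq 0$. Because $M$ is irreducible and $[S]\neq 0$, repeated compression along disks supplied by the loop theorem reduces the complexity of $S$ until it is incompressible; in particular $S$ is neither $S^2$ nor, as $M$ is oriented, $\mathbb{RP}^2$.

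Cutting $M$ along $S$ produces a compact irreducible manifold $M'$ with two incompressible copies $S_\pm$ of $S$ in its boundary, and $M$ is recovered by regluing $S_-$ to $S_+$. On fundamental groups this exhibits $\pi_1(M)$ as the HNN extension with vertex group $\pi_1(M')$, edge group $\pi_1(S)$, and two injections $i_{\pm *}\colon\pi_1(S)\to\pi_1(M')$, the stable letter mapping to a generator of $\mathbb{Z}$ under $\phi$. The kernel $G=\ker\phi$ is then exactly $\pi_1(\widetilde M)$, where $\widetilde M$ is the infinite cyclic cover obtained by gluing copies $\dots,M'_{-1},M'_0,M'_1,\dots$ end to end along copies of $S$.

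The crux is a Bass--Serre argument showing that finite generation of $G$ forces one edge inclusion to be onto. Write $G=\bigcup_n K_n$ as the increasing union of the subgroups $K_n=\pi_1(M'_{-n}\cup\cdots\cup M'_n)$, the inclusions being injective because the gluing surfaces are incompressible. If neither $i_{-*}$ nor $i_{+*}$ were surjective, then van Kampen shows $K_n\subsetneq K_{n+1}$ for every $n$, since each newly attached copy of $\pi_1(M')$ contributes elements outside the amalgamating image of $\pi_1(S)$; but an increasing union of proper subgroups is never finitely generated, contradicting the hypothesis on $G$. Hence one inclusion, say $i_{+*}\colon\pi_1(S)\to\pi_1(M')$, is an isomorphism, being already injective by incompressibility. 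The excluded value $G=\mathbb{Z}_2$ corresponds precisely to $S=\mathbb{RP}^2$, which orientability of $M$ already forbids.

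Finally I would upgrade this $\pi_1$-isomorphism to a genuine product. Since $M'$ is an irreducible $3$-manifold with incompressible boundary and the boundary inclusion $S_+\hookrightarrow M'$ induces an isomorphism on $\pi_1$, Waldhausen's theorem for Haken manifolds gives $M'\cong S\times[0,1]$. Consequently $M$ is the mapping torus of the regluing homeomorphism $h\colon S\to S$, so $M$ fibers over $S^1$ with fiber $S$. I expect the Bass--Serre step to be the main obstacle: translating \emph{finitely generated kernel} into \emph{an edge group surjects} is the substantive content of the theorem, whereas the surface realization and the final product structure rest respectively on the loop theorem and on Waldhausen's theorem.
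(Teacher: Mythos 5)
The paper itself contains no proof of Theorem \ref{Stallings}: it is quoted as Stallings' fibration theorem with \cite{Sta} as the source and used as a black box in the proof of Theorem \ref{Lefschetzbundles}. So your proposal has to be measured against the classical argument, and its skeleton is indeed the classical one: realize the epimorphism by a map to $S^1$, arrange the dual surface to be incompressible, cut, view the infinite cyclic cover as a bi-infinite chain of copies of the cut manifold $M'$, use finite generation of $G$ to force an edge inclusion $\pi_1(S)\to\pi_1(M')$ to be onto, and finish with the product theorem (Stallings/Waldhausen; see \cite{Hem}, Chapter 10) to exhibit $M$ as a mapping torus. Contrary to your closing remark, the Bass--Serre step is not where the difficulty lies: your argument there is correct, and in fact gives more than you claim -- if \emph{either} edge inclusion fails to be onto then $K_n\subsetneq K_{n+1}$, so finite generation forces \emph{both} inclusions to be isomorphisms; in any case a one-sided isomorphism together with the product theorem does yield $M'\cong S\times[0,1]$ and hence the fibration.

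The genuine gap is in the surface-preparation step. Compressing $S$ along a disk whose boundary separates $S$ disconnects it: the result is $S_1\sqcup S_2$ with $[S_1]+[S_2]=PD(\phi)$, and \emph{both} classes can be nonzero. Concretely, a genus-two surface dual to $\phi$ can compress into two disjoint tori $T_1,T_2$ with $[T_1]+[T_2]=PD(\phi)$ and $[T_i]\neq 0$; each such torus is then automatically incompressible (compressing a homologically essential torus would produce a homologically essential, hence non-separating, sphere, contradicting irreducibility), so the compression process really can terminate at a disconnected surface. You cannot discard a component without destroying duality to $\phi$, and tubing the components back together reintroduces an obvious compressing disk, so ``connected and incompressible'' cannot be achieved by your procedure. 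Once the dual surface is disconnected, your entire setup changes: $M$ cut along it is no longer a single $M'$ with two boundary copies of $S$, $\pi_1(M)$ is a graph of groups rather than an HNN extension with a single edge, and the infinite cyclic cover is a $\mathbb{Z}$-periodic graph of groups with several edges per period. The chain argument, the ``some edge map is onto'' conclusion, and the final product step all have to be reworked in that generality; this bookkeeping (Stallings' ``binding tie'' argument, see \cite{Sta} or \cite{Hem}, Chapter 11) is the substantive content of the theorem, and it is exactly what your write-up elides by assuming the compressed surface stays connected.
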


We now come to the main result of this section.
\begin{thm}\label{Lefschetzbundles}
Let $E \stackrel{\pi} \rightarrow M$ be a symplectic circle bundle over an irreducible base $M$. If $E$ admits a Lefschetz fibration, then $M$ fibers over $S^1$.
\end{thm}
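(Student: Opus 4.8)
The plan is to combine Lemma \ref{novanishingcycles} with Stallings' theorem (Theorem \ref{Stallings}). Since $E$ is a symplectic circle bundle over the irreducible base $M$ admitting a Lefschetz fibration $E \stackrel{p}\rightarrow B$, Lemma \ref{novanishingcycles} tells us that $p$ has no critical points, so $p$ is in fact a genuine smooth fiber bundle $F \to E \stackrel{p}\rightarrow B$ with fiber a closed oriented surface $F$ and base a closed oriented surface $B$. My first step is to extract algebraic structure on $\pi_1(M)$ from the interplay of the two bundle structures on $E$: the circle bundle $S^1 \to E \stackrel{\pi}\rightarrow M$ and the surface bundle $F \to E \stackrel{p}\rightarrow B$.

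The key idea is to track the fiber circle. By Corollary \ref{nocontractibleorbits}, the inclusion of the $S^1$-fiber induces an injection $\pi_1(S^1) \hookrightarrow \pi_1(E)$, so the fiber represents an infinite-order central (or at least normal) element. I would then examine how this circle sits relative to the surface fibration $p$. The homotopy long exact sequence of $F \to E \to B$ gives
\[ \pi_2(B) \to \pi_1(F) \to \pi_1(E) \to \pi_1(B) \to 1, \]
while the circle bundle gives
\[ 1 \to \pi_1(S^1) \to \pi_1(E) \to \pi_1(M) \to 1. \]
The goal is to produce from these an extension $1 \to G \to \pi_1(M) \to \mathbb{Z} \to 1$ with $G$ finitely generated, which is exactly what Stallings' theorem needs. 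The natural candidate for the surjection $\pi_1(M) \to \mathbb{Z}$ is obtained by composing with a map detecting the base $B$ of the Lefschetz fibration: if $B$ has positive genus one projects to a $\mathbb{Z}$ summand of $H_1(B)$, so I would first want to reduce to that situation (the genus-zero case, $B = S^2$, together with the fiber-genus analysis, should be handled separately and likely leads either to a direct fibration or a contradiction with the symplectic hypothesis).

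Concretely, I would pull the projection $\pi_1(E) \to \pi_1(B) \to \mathbb{Z}$ back and check it kills the fiber circle $\pi_1(S^1)$, so that it descends to a surjection $\psi \colon \pi_1(M) \to \mathbb{Z}$; its kernel $G$ is then the fundamental group of the corresponding infinite cyclic cover. The finite generation of $G$ is the crucial point and I expect it to be the main obstacle: one must show that the kernel of $\psi$ is finitely generated, which is where the surface-bundle structure $F \to E \to B$ does the real work, since $\pi_1(F)$ is finitely generated and the circle fiber is central, one should be able to bound the cover of $M$ by the fiber surface $F$ modulo the circle direction. Once finite generation of $G$ is established, irreducibility of $M$ (hypothesis) together with $G \neq \mathbb{Z}_2$ (automatic since $b_1(M) \geq 1$ by Lemma \ref{gysinlemma} forces $G$ to be infinite or the relevant group to be large) lets me invoke Theorem \ref{Stallings} to conclude that $M$ fibers over $S^1$.
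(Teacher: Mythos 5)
Your starting point (Lemma \ref{novanishingcycles} to eliminate critical points, then Stallings' theorem) matches the paper's strategy, but your sketch leaves the decisive steps unproven and, in one case, the route you describe actually fails. The fact you do not exploit is that the circle fiber $\gamma$ is \emph{central} in $\pi_1(E)$ (you hedge with ``central (or at least normal)''). Centrality is what pins down the topology: if $p_*\gamma \neq 1$ then $p_*\gamma$ is a nontrivial central element of $\pi_1(B)$, which forces $B = T^2$; if $p_*\gamma = 1$ then $\gamma$ is an infinite-order central element of $\pi_1(F)$, which forces $F = T^2$. In the first case one quotients $1 \to \pi_1(F) \to \pi_1(E) \to \mathbb{Z}^2 \to 1$ by $\langle \gamma \rangle$, writes $\mathbb{Z}^2/\langle p_*\gamma\rangle = \mathbb{Z} \oplus \mathbb{Z}_k$, and takes the Stallings kernel $G$ to be the preimage of the torsion summand; then $G$ contains $\pi_1(F)$ with finite index, which settles the finite generation you explicitly flag as ``the main obstacle.'' That step is immediate once $B = T^2$ is known, but your plan of projecting to an arbitrary $\mathbb{Z}$-summand of $H_1(B)$ for $B$ of positive genus neither guarantees that the map kills $\gamma$ nor that the kernel is finitely generated.

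The case $p_*\gamma = 1$ is where your approach breaks down entirely: there $F = T^2$ while $B$ may be hyperbolic, and the kernel of any surjection $\pi_1(B) \to \mathbb{Z}$ is then an infinitely generated free group, so the induced $G \subset \pi_1(M)$ is not finitely generated and Stallings cannot be applied along the surface-bundle direction. The paper instead observes that $\pi_1(M) = \pi_1(E)/\langle\gamma\rangle$ contains the infinite cyclic normal subgroup $\mathbb{Z}^2/\langle\gamma\rangle$ (torsion-freeness of $\pi_1(M)$, coming from irreducibility, kills the potential $\mathbb{Z}_k$ factor), concludes from Hempel's criterion that $M$ is Seifert fibered, and then invokes Corollary \ref{Seifertcase} --- a genuinely different mechanism, resting on the Seiberg--Witten arguments of Section \ref{VanishingThNorm}, that your sketch does not anticipate.
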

\begin{proof}
First of all by Lemma \ref{novanishingcycles} we have that $E$ actually admits a fibration $F \to E \stackrel{p} \rightarrow B$. In addition we note that the fiber $\gamma$ of any circle bundle lies in the centre of its total space, $\pi_1(E)$. We shall have to consider two distinct cases according to whether $\gamma$ is in the kernel of $p_*$ or not.\\

\emph{Case 1}: $p_* (\gamma) \neq 1$.\\

\noindent Since $\gamma$ was central in the fundamental group of $E$ the fact that $p_*(\gamma)$ is nontrivial in $\pi_1(B)$ means that $B$ must be a torus. Hence the long exact homotopy sequence of the fibration gives the following short exact sequence
\begin{equation}\label{seq_1}   1 \to \pi_1(F) \to \pi_1(E) \stackrel{p_*} \rightarrow \pi_1(T^2) = \mathbb{Z}^2 \to 1. \end{equation}
Since $M$ is assumed to be irreducible and hence aspherical we also have the following exact sequence from the homotopy exact sequence of the fibration $S^1 \to E \stackrel{\pi} \rightarrow M$:
\begin{equation}\label{seq_2}   1 \to \pi_1(S^1) = \langle \gamma \rangle \to \pi_1(E) \stackrel{\pi_*} \rightarrow \pi_1(M) \to 1. \end{equation}
Because $\gamma$ is central in $\pi_1(E)$, the sequence (\ref{seq_1}) gives the following exact sequence
\[ 1 \to \pi_1(F) \to \pi_1(E)/ \langle \gamma \rangle \stackrel{p_*} \rightarrow \mathbb{Z}^2/ \langle p_*\gamma \rangle \to 1. \]
Moreover since $p_*\gamma \neq 1$ we have that $\mathbb{Z}^2/ \langle p_*\gamma \rangle = \mathbb{Z} \oplus \mathbb{Z}_k$ for some $k$. If we let $H = p_*^{-1}(\mathbb{Z}_k)$ we see that $H$ has $\pi_1(F)$ as a finite index subgroup and is thus also finitely generated. Then by taking the projection to $\mathbb{Z}$ in the above sequence we obtain
\[ 1 \to H \to \pi_1(E)/ \langle \gamma \rangle  = \pi_1(M) \stackrel{p_*} \rightarrow \mathbb{Z} \to 1. \]
This is exact and $H \neq \mathbb{Z}_2$ since it contains $\langle \gamma \rangle$. As $M$ is irreducible, the hypotheses of Theorem \ref{Stallings} are satisfied and we conclude that $M$ fibers over $S^1$.\\

\emph{Case 2}: $p_* (\gamma) = 1$.\\

\noindent In this case $\langle \gamma \rangle \subset \pi_1(F)$ and hence $F = T^2$. Thus sequence (\ref{seq_1}) above yields the following
\[ 1 \to \mathbb{Z}^2 \to \pi_1(E) \stackrel{p_*} \rightarrow \pi_1(B) \to 1\]
and $\langle \gamma \rangle \subset \mathbb{Z}^2$. Again by taking the quotient by $\langle \gamma \rangle$ we obtain the following short exact sequence
\[ 1 \to \mathbb{Z} \oplus \mathbb{Z}_k = \mathbb{Z}^2/ \langle \gamma \rangle\to \pi_1(E)/\langle \gamma \rangle = \pi_1(M) \stackrel{p_*} \rightarrow \pi_1(B) \to 1.\]
However since $M$ is irreducible and hence prime and $\pi_1(M)$ is infinite it follows from (\cite{Hem}, Corollary 9.9) that $\pi_1(M)$ is torsion free. Hence $k = 0$ and $\pi_1(M)$ contains an infinite cyclic normal subgroup, thus by (\cite{Hem}, Corollary 12.8) it is in fact Seifert fibered and the result follows from Corollary \ref{Seifertcase} above. 
\end{proof}

Theorem \ref{Lefschetzbundles} then allows us to prove Conjecture \ref{strong} under the assumption that the total space is a complex manifold.
\begin{cor}
Conjecture \ref{strong} holds in the case that $E$ is a complex manifold.
\end{cor}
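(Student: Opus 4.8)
The plan is to produce a fibration of $E$ from the Enriques--Kodaira classification, feed it into Theorem \ref{Lefschetzbundles}, and then reduce the Euler-class condition of Conjecture \ref{strong} to the cases already settled in Corollary \ref{Seifertcase}. The starting point is that, as $E$ is a circle bundle over a $3$-manifold, the generator of the $S^1$-action is a nowhere-vanishing vector field, so $\chi(E)=0$; by Lemma \ref{gysinlemma} the signature of $E$ vanishes as well. I would then run through the classification of complex surfaces under these constraints. Since $E$ is symplectic we have $b_2^+(E)\geq 1$, which excludes the non-symplectic surfaces of class VII; surfaces of general type are excluded because they satisfy $c_2>0$ and blowing up only increases $c_2$, while rational surfaces have $\chi\geq 3$. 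The condition $\chi(E)=0$ therefore leaves only the ruled surfaces over an elliptic curve and the elliptic surfaces, the latter including the complex tori, the bielliptic surfaces, and the non-K\"ahler but symplectic Kodaira surfaces. In each case $E$ carries a holomorphic map onto a curve $B$ whose generic fiber $F$ is $S^2$ or $T^2$.

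The next step is to see that this map may be taken to be a Lefschetz fibration. For an elliptic surface the Euler number is the sum of the strictly positive Euler contributions of its singular fibers, so $\chi(E)=c_2(E)=0$ forces all fibers of positive Euler number to be absent, while for a ruled surface the projection is already a locally trivial $S^2$-bundle. The one delicate point here is to rule out the multiple fibers of an elliptic fibration, which contribute nothing to $c_2$; I would handle these using the free circle action to see that $p$ is genuinely a submersion. One then obtains an honest surface bundle $F \to E \stackrel{p}\rightarrow B$, in particular a Lefschetz fibration with no critical points, and Theorem \ref{Lefschetzbundles} applies to show that $M$ fibers over $S^1$.

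Finally I would upgrade this to the full conclusion of Conjecture \ref{strong}, namely the requirement that some fiber pair trivially with $e(E)$; this is the step I expect to be the main obstacle, since Theorem \ref{Lefschetzbundles} only produces \emph{some} fibration of $M$. The key is that the complex structure bounds the genus of $F$ by one and thereby lands us in the situation of Corollary \ref{Seifertcase}. If $F=S^2$ then $\pi_1(E)\cong\pi_1(B)\cong\mathbb{Z}^2$, so $\pi_1(M)=\pi_1(E)/\langle\gamma\rangle$ is virtually cyclic; as a closed aspherical $3$-manifold cannot have virtually cyclic fundamental group, Theorem \ref{asphericity} forces $M=S^2\times S^1$ with $e(E)=0$, and the conjecture holds trivially. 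If $F=T^2$ I would split along the dichotomy in the proof of Theorem \ref{Lefschetzbundles}: when the circle fiber $\gamma$ lies in $\pi_1(F)$ the base $M$ is Seifert fibered, and when it does not, centrality of $\gamma$ forces $B$ to be a torus, so that $\pi_1(M)$ is virtually a $\mathbb{Z}^2$-by-$\mathbb{Z}$ group and $M$ is a torus bundle over $S^1$; such a manifold is either Seifert fibered or a Sol-manifold, the latter having $b_1(M)=1$ and hence a fibration by tori, so that the Thurston norm vanishes identically. In every case $M$ is Seifert fibered or has $||\ ||_T\equiv 0$, and Corollary \ref{Seifertcase} delivers Conjecture \ref{strong}.
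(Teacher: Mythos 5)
Your overall route is the same as the paper's: use the Kodaira classification together with $\chi(E)=0$, $\sigma(E)=0$ and symplecticity to reduce $E$ to ruled-over-elliptic and elliptic-type surfaces, and then funnel everything into Corollary \ref{Seifertcase} (the paper simply quotes Etg\"u's Theorem 5.1 for the trichotomy $S^2\times T^2$, $T^2$-bundle over $T^2$, or Seifert fibration over a hyperbolic orbifold). Your endgame --- the $\pi_1$ argument for $F=S^2$, the dichotomy on whether $\gamma\in\pi_1(F)$ for $F=T^2$, and the observation that a Sol torus bundle has $b_1=1$ and vanishing Thurston norm --- is correct and matches the paper's use of Cases 1 and 2 of Theorem \ref{Lefschetzbundles}.

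There is, however, one genuine gap: the claim that the free circle action lets you ``see that $p$ is genuinely a submersion'' and thereby rule out multiple fibers is false. Take $M$ Seifert fibered over a hyperbolic orbifold with exceptional fibers and $b_1(M)\geq 1$, and let $E=M\times S^1$; this is a properly elliptic surface whose elliptic fibration $E\to M/S^1$ has multiple fibers over the cone points, even though $E$ carries an obvious free circle action preserving every fiber of $p$. Near a multiple fiber the projection is not a submersion and the map is not a Lefschetz fibration, so in the Kodaira-dimension-one case you do not obtain ``an honest surface bundle'' and Theorem \ref{Lefschetzbundles} cannot be invoked as you state. The case is not vacuous and must be handled; the fix is the paper's: regard $E$ as a $T^2$-Seifert fibration over a $2$-orbifold, so that $\pi_1(E)$ still has a normal subgroup $\mathbb{Z}^2\supset\langle\gamma\rangle$ with quotient the orbifold fundamental group, and then run exactly the group-theoretic argument of Case 2 of Theorem \ref{Lefschetzbundles} (torsion-freeness of $\pi_1(M)$ plus Hempel's Corollary 12.8) to conclude that $M$ is Seifert fibered, after which Corollary \ref{Seifertcase} applies. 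A second, purely cosmetic, inaccuracy: a generic complex $2$-torus admits no holomorphic fibration onto a curve, so in that case you should appeal to the smooth $T^2$-bundle structure of $T^4$ rather than a holomorphic map.
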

\begin{proof}
By considering the Kodaira classification and noting that $E$ is spin, symplectic and has $\chi(E) =0$ one concludes that one of the following must hold (cf. \cite{Etg} Theorem 5.1)
\begin{enumerate}
\item $E= S^2 \times T^2$
\item $E$ is a $T^2$-bundle over $T^2$
\item $E$ is a Seifert fibration over a hyperbolic orbifold.
\end{enumerate}
If $E= S^2 \times T^2$ then $M = S^2 \times S^1$ and one clearly has an $S^1$-invariant symplectic form. In the second case it follows from the argument above that $M$ is a $T^2$-bundle over $S^1$ and hence has vanishing Thurston norm. In the final case $M$ must be Seifert fibered as in Case 2 in the proof of Theorem \ref{Lefschetzbundles} and hence the claim holds in the latter two cases by Corollary \ref{Seifertcase}.
\end{proof}

 \end{document}